\RequirePackage{fix-cm}
\documentclass[smallextended]{svjour3}       

\smartqed  
\usepackage{graphicx}

\usepackage{amssymb,latexsym,amsmath}
\usepackage{mathptmx}

\usepackage{amsmath}
\usepackage{amssymb,latexsym}
\usepackage{multicol}

\newcommand{\numberset}{\mathbb}

\newcommand{\F}{\numberset{F}}

\newcommand{\Pro}{\numberset{P}}

\newcommand{\Ol}{\mathcal{O}}

\newtheorem{notation}[theorem]{Notation}
\usepackage{hyperref}

\newtheorem{cexample}[theorem]{Example}

\begin{document}

\title{On the dual minimum distance and minimum weight of codes 
from a quotient of the Hermitian curve}

\titlerunning{Codes from a quotient of the Hermitian curve}

\author{Edoardo Ballico         \and
        Alberto Ravagnani 
}

\institute{Edoardo Ballico \at
              Department of Mathematics \\
              University of Trento\\
              Via Sommarive 14, 38123 Povo, Trento (Italy) \\
              \email{edoardo.ballico@unitn.it}   \and
Alberto Ravagnani \at
              Institut de Math\'{e}matiques \\
              Universit\'{e} de Neuch\^{a}tel\\
              Rue Emile-Argand 11, CH-2000 Neuch\^{a}tel (Switzerland) \\
              \email{alberto.ravagnani@unine.ch}
}

\maketitle

%
%
%
%
%
%

\begin{abstract}
 In this paper we study evaluation codes arising from plane quotients of the 
Hermitian curve, defined by affine equations of the form $y^q+y=x^m$, $q$ being
a prime power and
$m$ a positive integer which divides $q+1$. The  dual minimum distance and 
minimum weight of
such codes are studied from a geometric point of view. 
In many cases we completely describe the minimum-weight codewords of
their dual codes through a
geometric
characterization of the supports, and provide their number. 
Finally, we apply our results to describe Goppa codes of classical interest on
such curves.
\end{abstract}


\keywords{quotient of Hermitian curve \and Goppa code \and minimum
distance \and minimum-weight codeword \and evaluation code}

 \subclass{14G15 \and 14H99 \and 14N05}

\setcounter{section}{-1}

\section{Introduction}

Let $q$ be a prime power, and let $X$ be a smooth projective curve
defined over the finite field $\F_q$ with $q$ elements. Consider a divisor $D$
on
$X$, and take points $P_1,...,P_n \in X(\F_q)$ avoiding the support of
$D$. Set $\overline{D}:=\sum_{i=1}^n P_i$.   
The \textbf{Goppa code} $\mathcal{C}(\overline{D},D)$ is defined as the linear
code obtained
evaluating the Riemann-Roch space 
space $L(D)$ at the points $P_1,...,P_n$ (see \cite{Ste} for a geometric
introduction to Goppa codes). If $D$ is supported by $s$ points, then
$\mathcal{C}(\overline{D},D)$ is said to be an $s$-point code. Goppa codes
yield good parameters (see \cite{LS} for a useful analysis) and their minimum
distance can be lower-bounded thanks to their geometric structure (\cite{Ste},
Chapter 10).

The parameters of a Goppa code strictly depend on the curve chosen in the
construction: different curves give different codes. 
The most studied Goppa
codes are probably those arising from the Hermitian curve (see the
references in Subsection \ref{sub1}). In this paper we focus on plane quotients
of the Hermitian curve, and study a wide class of evaluation codes on such
singular curves by means of geometric techniques.
We also show that our family includes many Goppa codes of classical interest,
such as
one-point and two-point codes arising from these curves.

\subsection{Main references on codes from the Hermitian curve and its quotients}
\label{sub1}

One-point codes on a Hermitian curve (see \cite{Sti}, Example VI.3.6)
are well-studied in the literature, and efficient methods to decode them are known
(see for instance \cite{Sti}, \cite{yk} and \cite{yks}).
The minimum distance of Hermitian two-point codes has been first determined by
M. Homma and S. J. Kim
 (\cite{hk1}, \cite{hk2}, \cite{hk3}, \cite{hk4}). S. Park
gave
explicit formulas
for the dual minimum distance of such codes (see \cite{Park}). More recently, 
Hermitian codes from higher-degree places have been considered in \cite{Kor}.
The dual minimum distance of many three-point codes on the Hermitian curve is
computed in \cite{br}, by extending a recent and powerful approach by A.
Couvreur (see \cite{c3}).

Two-point codes arising from quotients of the Hermitian curve
are deeply studied in \cite{clp} and \cite{col}, computing, in particular, their
dimensions. See \cite{gv} and \cite{matth1} for the explicit
computation of several Weierstrass semigroups on these curves.

\subsection{Layout of the paper}

Let us briefly discuss the structure of the paper. We introduce plane
quotients of the Hermitian curve in Section \ref{intr}, summarizing
their projective geometry. In Section \ref{prelim} we define two families of
evaluation codes arising from these curves, and state some
preliminary geometric lemmas. The first family (which we call
\textit{uncomplete codes}) is the simplest one, and it is studied in
depth in Section \ref{uncomplete_codes}. The analysis of the second family
(namely, that of \textit{complete codes}) is
performed in Section \ref{vuoto} and Section \ref{non_vuoto}.
Applications to the study of classical Goppa codes are given in Section
\ref{corollari}.

\section{Preliminaries}
\label{intr}

Let $q$ be a prime power and let $\Pro^2$ denote the projective plane of
homogeneous coordinates $(x:y:z)$ over the
field 
$\F_{q^2}$. Choose a positive divisor 
of $q+1$, say $m$, and consider the curve $Y_m$ defined over $\F_{q^2}$ by the
affine 
equation $y^q+y=x^m$. If $m=q-1$, then $Y_{m}$ is the well-known Hermitian
curve. Here we
focus on the more complicated case $m \ne
q+1$. In this situation  $Y_m$ has exactly one point at infinity,
namely,
$P_\infty=(1:0:0)$. We notice that $Y_m$ is a singular plane curve, carrying
$P_\infty$ as a unique singular point.  
It is well-known that the normalization of $Y_m$, say $C_m$, is a maximal curve.
Moreover, $Y_m$ carries
$|Y_m(\F_{q^2})|=1+q(1+(q-1)m)$ points which are $\F_{q^2}$-rational, and its
geometric genus  
(by definition, the geometric genus of $C_m$) is $(q-1)(m-1)/2$ 
(see \cite{Sti}, Example 6.4.2 at page 234). Denote by $\pi:C_m \to Y_m$ the
normalization of $Y_m$, and let $i:Y_m
\hookrightarrow \Pro^2$ be the inclusion of $Y_m$ into the projective plane.
Since
$\pi$ is injective, the composition $u:= i \circ \pi:C_m \to \Pro^2$ is an
injective morphism. We define $Q_\infty \in C$ by $\pi(Q_\infty)=P_\infty$.
By \cite{Sti}, Proposition 6.4.1, for any integer $r \ge 0$ the
monomials $x^iy^j$ 
such that 
\begin{eqnarray} \label{basis}
 i \ge 0, \ \ \  0 \le j \le q-1, \ \ \  qi+mj \le r
\end{eqnarray}
form a basis of the vector space $L(rQ_\infty)$, the Riemann-Roch space
associated to the divisor $rQ_\infty$ on $C_m$.

\begin{notation}
 In the sequel, we work with a fixed $q$ and a fixed $m$. Hence,
we will always write $Y$ and $C$ instead of $Y_m$ and $C_m$, respectively.
\end{notation}

\section{Definitions and preliminary results}\label{prelim}
Here we introduce two classes of evaluation codes on a quotient
$Y=Y_m$ of the Hermitian curve, and discuss their basic properties.

\begin{definition}\label{defcodici}
Let $E\subseteq Y$ be a zero-dimensional scheme defined over $\F_q$ (the case
$E=\emptyset$ may of course be considered). Fix an integer $d>0$ and
set\footnote{Here the subscript \textit{red} denotes
the reduction of a zero-dimensional scheme.}
$B:=Y(\mathbb {F}_{q^2})\setminus (E_{red} \cup \{P_\infty \})$. The
\textbf{uncomplete} code $\mathcal{B}(d,-E)$ is the code obtained
evaluating the vector space $H^0(\Pro^2,\mathcal{I}_E(d))$ on the set $B$. The
\textbf{complete} code $\mathcal{C}(d,-E)$ is the code obtained evaluating
the vector space $H^0(C,\pi^{*}(\Ol_Y(d))(\pi^{-1}(-E)))$ on $\pi^{-1}(B)$. 
\end{definition}

The aim of this paper is to study the minimum distance and the minimum-weight
codewords of codes of type $\mathcal{B}(d,-E)^\perp$ and
$\mathcal{C}(d,-E)^\perp$.  In Section \ref{corollari} we will show that many
codes on $Y$ of classical
interest (such as Goppa one-point and two-point codes) can be easily studied as
$\mathcal{C}(d,-E)$ codes.

\begin{proposition}
 Let $\mathcal{B}(d,-E)$ and $\mathcal{C}(d,-E)$ be as in Definition
\ref{defcodici}. If $d<q$, then $\mathcal{B}(d,-E)$ is a subcode of
$\mathcal{C}(d,-E)$. Moreover, the minimum distance of
$\mathcal{C}(d,-E)^\perp$ is at
least the minimum distance of $\mathcal{B}(d,-E)^\perp$.
\end{proposition}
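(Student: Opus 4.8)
The plan is to establish the first assertion (the subcode statement) and then deduce the second by a routine duality argument. Throughout, recall that $P_\infty=(1:0:0)$ is the only point of $Y$ at infinity and its only singular point, and that $\pi$ (hence $u=i\circ\pi$) is injective. Consequently every point of $B$ is a smooth $\F_{q^2}$-rational point of $Y$ admitting a unique preimage under $\pi$, so that $B$ and $\pi^{-1}(B)$ are in natural bijection and the codes $\mathcal{B}(d,-E)$ and $\mathcal{C}(d,-E)$ may be regarded as subspaces of the same ambient space $\F_{q^2}^{|B|}$ with matching coordinates. I would first reinterpret both codes as evaluations of function spaces on $C$. Since $m\le q$, the line $z=0$ meets $Y$ only at $P_\infty$ with multiplicity $\deg Y=q$, so its pullback to $C$ is $q\,Q_\infty$; hence $\pi^{*}(\Ol_Y(d))\cong\Ol_C(dq\,Q_\infty)$ and
\[
H^0\!\big(C,\pi^{*}(\Ol_Y(d))(\pi^{-1}(-E))\big)\;\cong\;L(dq\,Q_\infty-\pi^{-1}(E)).
\]
Fixing $z^{d}$ as a common reference section, the evaluation of a form for $\mathcal{B}(d,-E)$ and of a function for $\mathcal{C}(d,-E)$ are both normalized by dividing by $z^{d}$, so that the two evaluation maps agree on common inputs.

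Next I would exhibit the inclusion explicitly through the pullback map $F\mapsto f:=F(x,y,1)$ sending a form of $H^0(\Pro^2,\mathcal{I}_E(d))$ to a rational function on $C$. Two things must be checked: that $f$ has pole order at most $dq$ at $Q_\infty$, and that $f$ vanishes along $\pi^{-1}(E)$. The vanishing is automatic, since $F$ vanishes on $E$ scheme-theoretically, so its pullback vanishes on the preimage scheme $\pi^{-1}(E)$, and at the smooth points of $Y$ this transfers cleanly through the local isomorphisms induced by $\pi$. The pole bound is exactly where the hypothesis $d<q$ enters: writing $f$ in the monomials $x^{i}y^{j}$ with $i+j\le d$, the basis \eqref{basis} gives $\mathrm{ord}_{Q_\infty}(x^{i}y^{j})=-(qi+mj)$; since $d<q$ forces $j\le i+j\le d\le q-1$, each such monomial already lies in the reduced form of \eqref{basis} (no use of the relation $y^{q}=x^{m}-y$ is needed), and since $m\le q$ we obtain $qi+mj\le q(i+j)\le dq$. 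Hence every monomial of $f$, and therefore $f$ itself, lies in $L(dq\,Q_\infty-\pi^{-1}(E))$. As the two evaluations coincide on each point of $B$, this proves $\mathcal{B}(d,-E)\subseteq\mathcal{C}(d,-E)$.

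Finally, the dual statement follows formally. From $\mathcal{B}(d,-E)\subseteq\mathcal{C}(d,-E)$ one gets $\mathcal{C}(d,-E)^{\perp}\subseteq\mathcal{B}(d,-E)^{\perp}$, so the nonzero codewords of $\mathcal{C}(d,-E)^{\perp}$ form a subset of those of $\mathcal{B}(d,-E)^{\perp}$; taking the minimum Hamming weight over the smaller set can only increase it, which gives the claimed inequality on the dual minimum distances.

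I expect the only genuine difficulty to be the verification that the pullback map lands in the twisted space $L(dq\,Q_\infty-\pi^{-1}(E))$, i.e.\ the pole-order computation at $Q_\infty$; this is precisely the step that breaks down when $d\ge q$, since then monomials with $j\ge q$ appear and one must reduce via $y^{q}=x^{m}-y$, after which the bound $qi+mj\le dq$ need no longer hold. The scheme-theoretic behaviour of $E$ and its preimage near the singular point $P_\infty$ requires a little care, but it does not affect the evaluation, because $P_\infty\notin B$ and $\pi$ is injective.
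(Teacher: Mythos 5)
Your proof is correct and follows essentially the same route as the paper's: both establish the inclusion $\mathcal{B}(d,-E)\subseteq\mathcal{C}(d,-E)$ by means of the restriction-and-pullback map $H^0(\Pro^2,\mathcal{I}_E(d))\to H^0(C,\pi^*(\Ol_Y(d))(\pi^{-1}(-E)))$ and then dualize, the only difference being the level of detail. Where the paper simply invokes injectivity of this map (from $d<q$) and concludes, you verify that the map is well defined and compatible with the two evaluations --- which is in fact what the inclusion of codes actually requires; injectivity is logically irrelevant for the inclusion, since a form in the kernel evaluates to the zero codeword, which lies in $\mathcal{C}(d,-E)$ anyway. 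Two caveats concern your commentary rather than your argument. First, your diagnosis that the pole-order bound is ``precisely the step that breaks down when $d\ge q$'' is not right: the valuation $v_{Q_\infty}(x^iy^j)=-(qi+mj)$ holds for all $i,j\ge 0$, independently of any reduction modulo $y^q=x^m-y$ (valuations are attached to the function on $C$, not to a chosen polynomial representative), and since $m<q$ one always has $qi+mj\le q(i+j)\le qd$; what genuinely fails for $d\ge q$ is injectivity of the map, because a degree-$d$ form divisible by the degree-$q$ equation of $Y$ pulls back to zero on $C$. Second, since the definition allows $P_\infty\in E_{red}$ (this happens, e.g., for the one-point codes of Section \ref{corollari}), your two separate conditions --- pole order at most $dq$ at $Q_\infty$ and vanishing on $\pi^{-1}(E)$ --- must be merged at $Q_\infty$ into the single condition $v_{Q_\infty}(f)\ge \mathrm{mult}_{Q_\infty}(\pi^{-1}(E))-dq$; this does follow, but from your scheme-theoretic (ideal-sheaf) pullback argument applied at $Q_\infty$, not from the monomial computation, so the ``little care'' near $P_\infty$ that you defer is needed for membership in $L(dq\,Q_\infty-\pi^{-1}(E))$ itself, not merely for the evaluation step.
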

\begin{proof}
 Since (by assumption) $d<q$, the restriction (and pull-back) map of cohomology
groups
\begin{equation*}
 \rho_{d,E}:H^0(\Pro^2,\mathcal{I}_E(d)) \to
H^0(C,\pi^*(\Ol_Y(d))(\pi^{-1}(-E)))
\end{equation*}
is injective. It follows
$\mathcal{B}(d,-E) \subseteq \mathcal{C}(d,-E)$, and so $\mathcal{B}(d,-E)^\perp
\supseteq \mathcal{C}(d,-E)^\perp$. Since the minimum distance of a code is
computed by taking the minimum of the pairwise Hamming distances of the
codewords, the second part of the statement easily follows.
\end{proof}

The next result states technical properties of
zero-dimensional
subschemes of the projective plane $\Pro^2$. Lemma \ref{u00.01} is a key-point in our
approach, providing a geometric
interpretation to certain non-vanishing conditions of cohomology groups. We
will use this result many times throughout the paper to get necessary conditions
on the supports of some minimum-weight codewords.

\begin{lemma}\label{u00.01}
Fix integers $d>0$, $z\ge 2$ and a zero-dimensional scheme $Z\subseteq \mathbb
{P}^2$ such that
$\deg (Z) =z$.
\begin{enumerate}
\item[(a)] If $z\le d+1$, then $h^1(\mathbb {P}^2,\mathcal {I}_Z(d))=0$.

\item[(b)]\label{strano1} If $d+2 \le z\le 2d+1$, then $h^1(\mathbb
{P}^2,\mathcal {I}_Z(d))>0$ if and only if there
exists a line $L \subseteq \Pro^2$ such that $\deg (L\cap Z)\ge d+2$.
\end{enumerate}
\end{lemma}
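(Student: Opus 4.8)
The plan is to make the \emph{residual exact sequence} with respect to a line the single engine of the whole argument. Given a line $L\subseteq\Pro^2$ and a zero-dimensional scheme $Z$, write $\mathrm{Res}_L(Z)$ for the residual scheme (the one cut out by the ideal quotient $(\mathcal{I}_Z:\mathcal{I}_L)$) and $w:=\deg(Z\cap L)$, so that $\deg(\mathrm{Res}_L(Z))=\deg(Z)-w$ and $\mathrm{Res}_L(Z)\subseteq Z$. Since $Z\cap L$ is a degree-$w$ divisor on $L\cong\Pro^1$, one has
\[
0\to \mathcal{I}_{\mathrm{Res}_L(Z)}(d-1)\to \mathcal{I}_Z(d)\to \mathcal{O}_L(d-w)\to 0 .
\]
Because $\mathrm{Res}_L(Z)$ is zero-dimensional and $H^2(\Pro^2,\mathcal{O}(d-1))=0$, we get $H^2(\mathcal{I}_{\mathrm{Res}_L(Z)}(d-1))=0$, so the long exact sequence collapses to
\[
H^1(\mathcal{I}_{\mathrm{Res}_L(Z)}(d-1))\to H^1(\mathcal{I}_Z(d))\to H^1(\Pro^1,\mathcal{O}(d-w))\to 0 ,
\]
together with the elementary value $h^1(\Pro^1,\mathcal{O}(d-w))=\max(0,w-d-1)$. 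These two displays are all I intend to use.

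For part (a) I would induct on $d$: the case $d=0$ (degree $\le 1$) is a direct cohomology computation, and for the step I pick a line $L$ through a point of $Z$, so $w\ge 1$ and $\deg(\mathrm{Res}_L(Z))\le (d+1)-1=(d-1)+1$. The inductive hypothesis kills the left-hand $H^1$, while $w\le d+1$ gives $d-w\ge -1$ and hence kills the right-hand $H^1$; the collapsed sequence then forces $h^1(\mathcal{I}_Z(d))=0$. The ``if'' half of part (b) is immediate from the same collapsed sequence read the other way: if some line $L$ has $w\ge d+2$, then $H^1(\mathcal{I}_Z(d))$ surjects onto the nonzero group $H^1(\Pro^1,\mathcal{O}(d-w))$, so $h^1(\mathcal{I}_Z(d))>0$.

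The substance is the ``only if'' half of part (b), which I would also prove by induction on $d$ (base case $d=1$: then $z=3$ and $h^1\ne 0$ forces $Z$ to be collinear, i.e.\ $\deg(Z\cap L)=3=d+2$). Assume for contradiction that every line meets $Z$ in degree $\le d+1$. Since $z\ge d+2\ge 3$ I can choose a line $L$ with $w\ge 2$, so $W:=\mathrm{Res}_L(Z)$ has $\deg W=z-w\le 2d-1$; as $w\le d+1$, the collapsed sequence forces $h^1(\mathcal{I}_W(d-1))>0$. If $\deg W\le d=(d-1)+1$, part (a) at degree $d-1$ gives $h^1(\mathcal{I}_W(d-1))=0$, a contradiction; otherwise $d+1\le\deg W\le 2(d-1)+1$ and the inductive hypothesis produces a line $L'$ with $\deg(W\cap L')\ge(d-1)+2=d+1$, whence $\deg(Z\cap L')\ge d+1$ because $W\subseteq Z$.

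The main obstacle is the off-by-one that this induction creates: it naturally delivers a line meeting $Z$ in degree $\ge d+1$, one short of the required $d+2$. I would close this gap by a \emph{second} residuation. Under the contradiction hypothesis $\deg(Z\cap L')\le d+1$, so in fact $\deg(Z\cap L')=d+1$ and $\deg(\mathrm{Res}_{L'}(Z))=z-(d+1)\le d=(d-1)+1$. Part (a) at degree $d-1$ then gives $h^1(\mathcal{I}_{\mathrm{Res}_{L'}(Z)}(d-1))=0$, while $\mathcal{O}_{L'}(d-(d+1))=\mathcal{O}_{\Pro^1}(-1)$ also has vanishing $H^1$; feeding both into the collapsed sequence along $L'$ forces $h^1(\mathcal{I}_Z(d))=0$, contradicting the hypothesis and completing the induction. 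The one point I would check with care throughout is the bookkeeping of intersection degrees under residuation, namely the inclusion $\mathrm{Res}_L(Z)\subseteq Z$ and the additivity $\deg Z=w+\deg\mathrm{Res}_L(Z)$, since it is precisely this that lets me transfer the bounds between $Z$ and its successive residuals.
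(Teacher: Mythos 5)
Your proof is correct, but it necessarily takes a different route from the paper, because the paper does not actually prove this lemma: its entire proof is the citation ``See \cite{bgi}, Lemma 34.'' What you have done is reconstruct from scratch the Horace-type argument by which such statements are established in that literature: the residual exact sequence $0 \to \mathcal{I}_{\mathrm{Res}_L(Z)}(d-1) \to \mathcal{I}_Z(d) \to \mathcal{O}_L(d-w) \to 0$, a double induction on $d$, and a second residuation to repair the off-by-one. I checked the delicate steps and they hold: the collapse of the long exact sequence is legitimate since $h^2(\Pro^2,\mathcal{I}_{\mathrm{Res}_L(Z)}(d-1)) = h^2(\Pro^2,\mathcal{O}(d-1)) = 0$ for $d\ge 1$; the case split on $\deg W$ is exhaustive because choosing $w\ge 2$ gives $\deg W \le 2d-1 = 2(d-1)+1$; and in the closing step $\deg \mathrm{Res}_{L'}(Z) = z-(d+1) \le d$ together with $h^1(\Pro^1,\mathcal{O}(-1))=0$ does force $h^1(\mathcal{I}_Z(d))=0$, the desired contradiction. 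Two points are asserted without justification and deserve a line each, though both are standard and true: (i) when $Z$ is supported at a single (possibly non-reduced) point, the existence of a line with $\deg(Z\cap L)\ge 2$ follows because any zero-dimensional scheme of degree at least $2$ contains a degree-$2$ subscheme, and every degree-$2$ subscheme of $\Pro^2$ lies on a unique line; (ii) your inductions apply part (a) to residual schemes of degree $\le 1$, which the statement as given ($z\ge 2$) formally excludes, so you should note that part (a) holds trivially for $z\le 1$ as well. As for what each route buys: the paper's citation is short but opaque; your argument makes the result self-contained, exposes exactly the mechanism (residuation with respect to a line) that the paper itself reuses later in the proof of Theorem \ref{prult}, and shows as a bonus that the ``if'' direction of (b) needs no upper bound on $z$ at all.
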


\begin{proof}
 See \cite{bgi}, Lemma 34.
\end{proof}

 It is fundamental to our purpose to state Lemma \ref{u00.01} for
zero-dimensional schemes $Z \subseteq \Pro^2$, and not just for finite sets.
Moreover, the absence of such a lemma in higher-dimensional projective spaces
 is the main reason why we are forced to work in the plane, instead of on the
normalizations of $Y_m$ curves directly.

\section{Uncomplete codes}\label{uncomplete_codes}

Here we study the geometric properties of uncomplete $\mathcal{B}(d,-E)$ codes,
find out their dual minimum distance, and characterize the minimum-weight
codewords of their dual codes. We notice that the length of a
$\mathcal{B}(d,-E)$ code is
$1+q+q^2m-qm- |{E_{red} \cup \{Q_\infty\} }|$. Indeed, there
is no non-zero
global
section of $\mathcal{I}_E(d)$ vanishing at all the points of $B$
(in the notation of Definition \ref{defcodici}).

\begin{lemma} \label{h1}
 Let $B$ and $\mathcal{B}(d,-E)$ be as in Definition \ref{defcodici}. Assume
$\deg(E) \le d+1$. Let $S \subseteq B$ be a subset. There exists a codeword of
$\mathcal {B}(d,-E)^\perp$ whose support is contained in $S$ if and only if
$h^1(\Pro^2,\mathcal{I}_{E\cup S}(d))>0$. 
\end{lemma}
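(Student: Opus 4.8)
The plan is to translate the existence of a dual codeword supported in $S$ into a surjectivity statement for an evaluation map, and then read off that statement from a long exact cohomology sequence on $\Pro^2$. First I would record the elementary linear-algebra fact underlying the whole argument. A codeword of $\mathcal{B}(d,-E)^\perp$ is a vector $v\in \F_{q^2}^B$ orthogonal to the image of the code's evaluation map $\mathrm{ev}_B\colon H^0(\Pro^2,\mathcal{I}_E(d))\to \F_{q^2}^B$. Such a $v$ with $\mathrm{supp}(v)\subseteq S$ is the same thing as a vector in $\F_{q^2}^S$ (extended by zero) orthogonal to the image of the restricted evaluation map
\[
\mathrm{ev}_S\colon H^0(\Pro^2,\mathcal{I}_E(d))\longrightarrow \F_{q^2}^{S}.
\]
Hence a \emph{nonzero} codeword supported in $S$ exists if and only if $\mathrm{Image}(\mathrm{ev}_S)\neq \F_{q^2}^{S}$, i.e.\ if and only if $\mathrm{ev}_S$ fails to be surjective.

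Next I would set up the geometry to compute this cokernel. Since $S\subseteq B$ and $B$ avoids $E_{red}$, the set $S$ is a reduced zero-dimensional scheme disjoint from $E$, so the scheme-theoretic union gives a short exact sequence of sheaves on $\Pro^2$,
\[
0\longrightarrow \mathcal{I}_{E\cup S}(d)\longrightarrow \mathcal{I}_E(d)\longrightarrow \Ol_S\longrightarrow 0,
\]
whose quotient is (the degree-$d$ twist of, hence non-canonically isomorphic to) the structure sheaf of the $|S|$ reduced $\F_{q^2}$-points of $S$; thus $H^0(\Pro^2,\Ol_S)\cong \F_{q^2}^{S}$, and the induced map $H^0(\mathcal{I}_E(d))\to H^0(\Ol_S)$ coincides with $\mathrm{ev}_S$ up to a nonzero scalar in each coordinate (coming from the choice of local trivialisations), which affects neither surjectivity nor supports. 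The long exact sequence in cohomology then yields
\[
H^0(\mathcal{I}_E(d))\xrightarrow{\ \mathrm{ev}_S\ } H^0(\Ol_S)\xrightarrow{\ \delta\ } H^1(\mathcal{I}_{E\cup S}(d))\xrightarrow{\ \alpha\ } H^1(\mathcal{I}_E(d)),
\]
from which $\mathrm{coker}(\mathrm{ev}_S)\cong \mathrm{Image}(\delta)=\ker(\alpha)$.

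The key step, where the hypothesis $\deg(E)\le d+1$ is used, is to show that $\ker(\alpha)$ is all of $H^1(\mathcal{I}_{E\cup S}(d))$. By Lemma \ref{u00.01}(a) when $2\le \deg(E)\le d+1$, together with the trivial cases $\deg(E)\in\{0,1\}$ (for which $h^1(\Pro^2,\mathcal{I}_E(d))=0$ since $H^1(\Pro^2,\Ol(d))=0$ and a single reduced point imposes one independent condition on degree-$d$ forms), the assumption forces $h^1(\Pro^2,\mathcal{I}_E(d))=0$. Therefore $\alpha$ is the zero map, $\ker(\alpha)=H^1(\mathcal{I}_{E\cup S}(d))$, and so $\mathrm{coker}(\mathrm{ev}_S)\cong H^1(\Pro^2,\mathcal{I}_{E\cup S}(d))$. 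Combining this with the first paragraph, a nonzero codeword of $\mathcal{B}(d,-E)^\perp$ with support contained in $S$ exists if and only if $\mathrm{ev}_S$ is not surjective, i.e.\ if and only if $\mathrm{coker}(\mathrm{ev}_S)\neq 0$, i.e.\ if and only if $h^1(\Pro^2,\mathcal{I}_{E\cup S}(d))>0$, which is the assertion.

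I expect the genuine content to be concentrated in the vanishing $h^1(\Pro^2,\mathcal{I}_E(d))=0$ supplied by $\deg(E)\le d+1$; without it one only obtains that $\mathrm{coker}(\mathrm{ev}_S)$ is the proper subspace $\ker(\alpha)\subseteq H^1(\mathcal{I}_{E\cup S}(d))$, and the stated equivalence can fail. The remaining points are bookkeeping rather than conceptual obstacles: confirming that $S$ is disjoint from $E$ so that the quotient sheaf is supported exactly on $S$, and checking that the sheaf-theoretic restriction map matches the code-theoretic evaluation up to the harmless per-coordinate rescaling.
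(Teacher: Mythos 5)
Your proof is correct and follows essentially the same route as the paper's: both rest on the vanishing $h^1(\Pro^2,\mathcal{I}_E(d))=0$ forced by $\deg (E)\le d+1$ via Lemma \ref{u00.01}, combined with the cohomological reformulation ``a nonzero dual codeword supported in $S$ exists iff $S$ imposes dependent conditions on $H^0(\Pro^2,\mathcal{I}_E(d))$, iff $h^1(\Pro^2,\mathcal{I}_{E\cup S}(d))>h^1(\Pro^2,\mathcal{I}_E(d))$.'' The only differences are ones of detail, in your favor: you unpack that reformulation explicitly through the sequence $0\to\mathcal{I}_{E\cup S}(d)\to\mathcal{I}_E(d)\to\Ol_S\to 0$ and its long exact sequence, and you cover the cases $\deg (E)\in\{0,1\}$ which the statement of Lemma \ref{u00.01} (requiring $z\ge 2$) does not literally reach, both of which the paper treats as standard and leaves implicit.
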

\begin{proof}
 First of all, we prove that $h^1(\Pro^2,\mathcal{I}_E(d))=0$. By
contradiction, let us assume $h^1(\Pro^2,\mathcal{I}_E(d))>0$. By Lemma
\ref{u00.01}, there exists
a line $L \subseteq \Pro^2$ such that $\deg(L\cap E) \ge d+2$. This
contradicts $\deg(E) \le d+1$. Recall that the code $\mathcal {B}(d,-E)$
is obtained by evaluating on $B$ all the degree $d$ homogeneous forms vanishing
on $E$. A subset $S \subseteq B$ contains the support of a minimum-weight
codeword of $\mathcal {B}(d,-E)^\perp$ if and only if it imposes dependent
conditions to $H^0(\Pro^2, \mathcal{I}_E(d))$. Since $E\cap S=\emptyset$, this
is
equivalent to say that $h^1(\Pro^2,\mathcal{I}_{E\cup S}(d))>
h^1(\Pro^2,\mathcal{I}_{E}(d))=0$.
\end{proof}

\begin{proposition}\label{a1}
Let $\mathcal{B}(d,-E)$ be as in Definition \ref{defcodici}.
Assume $d \le m-2$
and $\deg(E) \le d-1$. Let ${\bf {w}}$ be  a codeword  of $\mathcal
{B}(d,-E)^{\bot}$, and let $S$ denote the support of ${\bf {w}}$. Assume $\deg(E
\cup
S) \le 2d+1$. The following facts hold.

\begin{enumerate}
 \item There exists a line $L\subseteq \mathbb {P}^2$, defined over $\F_{q^2}$,
with $S\subseteq L$ and  $\deg ((E\cup S)\cap L) \ge d+2$. 

\item Any $S'\subseteq S\cap L$ with $|S'| = d+2 -\deg (E\cap L)$
is the support of a codeword of $\mathcal {B}(d,-E)^{\bot}$ of weight
$d+2-\deg (E\cap L)$.

\item If
${\bf {w}}$ is a minimum-weight codeword of $\mathcal
{B}(d,-E)^{\bot}$,
then $|S| = d+2 -\deg (E\cap L)$.
\end{enumerate}
\end{proposition}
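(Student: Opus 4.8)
The plan is to derive all three statements from the cohomological criterion of Lemma \ref{h1} together with the geometric dichotomy of Lemma \ref{u00.01}, all the bookkeeping being carried out on the zero-dimensional scheme $E\cup S\subseteq\Pro^2$. Since $\mathbf{w}$ is a codeword of $\mathcal{B}(d,-E)^{\bot}$ with support $S$ and $\deg(E)\le d-1\le d+1$, Lemma \ref{h1} gives $h^1(\Pro^2,\mathcal{I}_{E\cup S}(d))>0$. As $\deg(E\cup S)\le 2d+1$, part (a) of Lemma \ref{u00.01} rules out $\deg(E\cup S)\le d+1$, so in fact $d+2\le\deg(E\cup S)\le 2d+1$; part (b) then yields a line $L$ (a priori only over $\cfq$) with $\deg((E\cup S)\cap L)\ge d+2$.

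The heart of statement $(1)$ is the inclusion $S\subseteq L$, which I would prove by contradiction. Suppose $P\in S\setminus L$. The residual scheme $\mathrm{Res}_L(E\cup S)$ has degree $\deg(E\cup S)-\deg((E\cup S)\cap L)\le(2d+1)-(d+2)=d-1$, and it contains $P$ as a reduced point, since $P\in S$ is reduced and $P\notin E$. By Lemma \ref{u00.01}(a) this residual scheme imposes independent conditions on forms of degree $d-1$, so there is $g\in H^0(\Pro^2,\Ol(d-1))$ vanishing on $\mathrm{Res}_L(E\cup S)\setminus\{P\}$ but not at $P$. Writing $\ell$ for a linear form cutting out $L$, the form $f:=\ell g$ has degree $d$, lies in $H^0(\Pro^2,\mathcal{I}_E(d))$ (it vanishes on $E$ because $\ell$ vanishes on $L$ and $g$ vanishes on $\mathrm{Res}_L(E)\subseteq\mathrm{Res}_L(E\cup S)\setminus\{P\}$), vanishes on all of $S\setminus\{P\}$, yet $f(P)=\ell(P)g(P)\ne 0$. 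Pairing $\mathbf{w}$ against the evaluation of $f$ then forces $\mathbf{w}_P=0$, contradicting $P\in S$. Once $S\subseteq L$ is known, rationality and uniqueness come for free: the relation $\deg((E\cup S)\cap L)=\deg(E\cap L)+|S|\ge d+2$ gives $|S|\ge d+2-\deg(E)\ge 3$, so $S$ contains at least two $\F_{q^2}$-rational points, the unique line through two of them is defined over $\F_{q^2}$, and it must coincide with $L$.

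For statement $(2)$ the argument is a pure degree count. Fix $S'\subseteq S\cap L=S$ with $|S'|=d+2-\deg(E\cap L)$. Because $S'$ is disjoint from $E$ and lies on $L$, one has $\deg((E\cup S')\cap L)=\deg(E\cap L)+|S'|=d+2$, while $\deg(E\cup S')\le(d-1)+|S'|\le 2d+1$; hence Lemma \ref{u00.01}(b) and Lemma \ref{h1} produce a dual codeword supported inside $S'$. To see its support is all of $S'$, so that the weight equals $|S'|$, I would rule out every proper subset $S''\subsetneq S'$: on $L$ one gets $\deg((E\cup S'')\cap L)\le\deg(E\cap L)+|S''|\le d+1$, and any line $L'\ne L$ meets $L$, hence $S''$, in at most one point, so $\deg((E\cup S'')\cap L')\le\deg(E)+1\le d$. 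Thus no line reaches degree $d+2$ on $E\cup S''$, Lemma \ref{u00.01} gives $h^1(\Pro^2,\mathcal{I}_{E\cup S''}(d))=0$, and by Lemma \ref{h1} no codeword is supported in $S''$; the codeword found above therefore has support exactly $S'$.

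Statement $(3)$ then follows by combining the previous two: part $(1)$ gives $|S|\ge d+2-\deg(E\cap L)$, while part $(2)$ exhibits codewords of weight exactly $d+2-\deg(E\cap L)$, so minimality of the weight of $\mathbf{w}$ forces equality. The main obstacle is the collinearity $S\subseteq L$ in $(1)$: the existence of $L$ and the degree bookkeeping in $(2)$ and $(3)$ are routine, but pinning the support to the line needs the residuation construction of the separating form $f=\ell g$, whose key quantitative input is precisely that the part of $E\cup S$ off $L$ has degree at most $d-1$, small enough for Lemma \ref{u00.01}(a) to guarantee the separating section.
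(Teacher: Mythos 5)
Your proof is correct, and it actually establishes more than the paper's own argument, although the two share the same skeleton: Lemma \ref{h1} to get $h^1(\Pro^2,\mathcal{I}_{E\cup S}(d))>0$, then Lemma \ref{u00.01} to produce a line $L$ with $\deg((E\cup S)\cap L)\ge d+2$. The difference lies in what is actually proved after that. For part (1), the paper stops at the observations that $\deg(E)\le d-1$ forces $|S\cap L|\ge 3$ and hence that $L$ is defined over $\F_{q^2}$; it never proves the stated inclusion $S\subseteq L$. Your residuation argument supplies exactly that missing step: bounding $\deg \mathrm{Res}_L(E\cup S)\le d-1$, invoking Lemma \ref{u00.01}(a) in degree $d-1$ to get a section $g$ separating a hypothetical $P\in S\setminus L$ from the rest of the residual scheme, and pairing $\mathbf{w}$ against the evaluation of $f=\ell g\in H^0(\Pro^2,\mathcal{I}_E(d))$ to force $\mathbf{w}_P=0$. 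This is the same residual-exact-sequence technique the paper deploys later (in the proof of Theorem \ref{prult}, sequence (\ref{eqz1})), but it is absent from the proof of Proposition \ref{a1} itself. Likewise, in part (2) the paper only exhibits a codeword whose support is \emph{contained in} a subset of $S\cap L$, while the statement asserts that every $S'$ of cardinality $d+2-\deg(E\cap L)$ is \emph{exactly} a support of that weight; your elimination of proper subsets $S''\subsetneq S'$ (no line meets $E\cup S''$ in degree $\ge d+2$, hence $h^1(\Pro^2,\mathcal{I}_{E\cup S''}(d))=0$ and Lemma \ref{h1} excludes smaller supports) delivers this exactness. Two trivial caveats you might add for completeness: when $d=1$ the bound $\deg\mathrm{Res}_L(E\cup S)\le d-1=0$ already forces $S\subseteq L$ without quoting Lemma \ref{u00.01} in degree $d-1$ (which requires a positive degree and a scheme of degree at least $2$), and when the residual scheme is $\{P\}$ alone the separating section exists trivially.
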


\begin{proof}
\begin{enumerate}
 \item By Lemma \ref{h1}, we have $h^1(\mathcal {I}_{E\cup S}(d)) >0$.
Since $\deg
(E\cup S) \le 2d+1$, by Lemma \ref{u00.01} there exists a
line
$L \subseteq \mathbb {P}^2$ such that $\deg (L \cap (E\cup S)) \ge d+2$.
Since $\deg (E)\le d-1$, we have $|S\cap L|\ge 3$. Since any point of $S$
is defined over $\mathbb {F}_{q^2}$, $L$ is defined over $\mathbb {F}_{q^2}$
itself.
\item Let $S_1\subseteq S\cap L$ be any subset with cardinality
at least $d+2 -\deg (E\cap L)$. We may take, for instance, $S_1=S\cap L$.
Since $\deg (L\cap (S_1\cup E)) \ge d+2$, we have
 $h^1(\mathcal {I}_{S_1\cup E}(d)) >0$ (Lemma \ref{u00.01} again). By Lemma
\ref{h1}, there exists a subset $S'\subseteq S_1$ which is the support of a
codeword ${\bf {w}}'$ of $\mathcal {B}(d,-E)^{\bot}$.
\item Apply part (2) and the definition of minimum distance.
\end{enumerate}
\end{proof}

\section{Complete codes: the case $E=\emptyset$}\label{vuoto}

This section and the following one are devoted to the more interesting class of
complete codes $\mathcal{C}(d,-E)$. When $E=\emptyset$, we will simply
write $\mathcal{C}(d)$ instead of $\mathcal{C}(d,\emptyset)$. We begin our
analysis by studying $\mathcal{C}(d)$ codes, whose geometry is rather simple.
In Section \ref{non_vuoto} we will consider the more general case, and extend
our results. 

\begin{definition}
 A line $L \subseteq \Pro^2$ is said to be \textbf{horizontal} if it has an equation of
the form $y=a$, for a certain $a \in \F_{q^2}$. The line at infinity,
of equation $z=0$, will be denoted by $L_\infty$. Notice that we do not consider this line horizontal. 
\end{definition}

\begin{lemma}\label{base}
 Let $\mathcal{C}(d)$ be a complete code with $d \le
m-2$. Then the minimum distance of $\mathcal{C}(d)^\perp$ is $d+2$, and $d+2$ points
in the support of a minimum weight codeword of $\mathcal{C}(d)^\perp$ are
collinear.
\end{lemma}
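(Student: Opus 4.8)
The plan is to pin the dual minimum distance between $d+2$ from below and $d+2$ from above, and then to deduce collinearity from Proposition~\ref{a1}. Since $m$ is a proper divisor of $q+1$ we have $m\le (q+1)/2<q$, hence $d\le m-2<q$, so the Proposition of Section~\ref{prelim} applies: $\mathcal B(d)\subseteq\mathcal C(d)$, and the minimum distance of $\mathcal C(d)^\perp$ is at least that of $\mathcal B(d)^\perp$. For the latter, Lemma~\ref{h1} (with $E=\emptyset$) says that a set $S\subseteq B$ supports a codeword of $\mathcal B(d)^\perp$ exactly when $h^1(\Pro^2,\mathcal I_S(d))>0$, and Lemma~\ref{u00.01}(a) forces $|S|\ge d+2$ in that case. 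Thus the minimum distance of $\mathcal C(d)^\perp$ is at least $d+2$.

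For the upper bound I would exhibit an explicit weight-$(d+2)$ codeword. First I would locate a line carrying enough rational points. Writing $a^q+a=\operatorname{Tr}_{\F_{q^2}/\fq}(a)\in\fq$, for any $a$ with $c:=a^q+a\in\fq^{\ast}$ the affine points of $Y$ on the horizontal line $y=a$ are the solutions of $x^m=c$; since $m\mid q+1$ one checks $c^{(q^2-1)/m}=(c^{\,q-1})^{(q+1)/m}=1$, so $c$ is an $m$-th power in $\F_{q^2}$ and $x^m=c$ has exactly $m\ge d+2$ distinct roots $x_1,\dots,x_m\in\F_{q^2}$. This yields $m$ distinct rational points of $Y$ on $y=a$.

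The crux is to show that $d+2$ of these collinear points support a codeword of the \emph{larger} code $\mathcal C(d)^\perp$, i.e.\ a dependency annihilating all of $H^0(C,\pi^{\ast}(\Ol_Y(d)))$ and not merely the plane forms. Here I would use the identification $H^0(C,\pi^{\ast}(\Ol_Y(d)))=L(dq\,Q_\infty)$, whose basis is, by (\ref{basis}) with $r=dq$, the set of monomials $x^iy^j$ with $0\le j\le q-1$ and $qi+mj\le dq$. Restricting such a monomial to $y=a$ turns it into $a^j x^i$, and the constraint $qi+mj\le dq$ together with $m<q$ forces $i\le d$; thus \emph{every} section of $\mathcal C(d)$ restricts on the line to a polynomial in $x$ of degree at most $d$, while each monomial $x^i$ with $0\le i\le d$ is actually attained (take $j=0$). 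Consequently, seeking $\lambda_1,\dots,\lambda_{d+2}$ with $\sum_k\lambda_k f(x_k,a)=0$ for all basis $f$ is equivalent to solving the $d+1$ equations $\sum_k\lambda_k x_k^{\,i}=0$ for $i=0,\dots,d$. The coefficient matrix is the $(d+1)\times(d+2)$ Vandermonde matrix $(x_k^{\,i})$: it has a one-dimensional kernel, and because every maximal minor is a nonzero Vandermonde determinant the kernel vector has all $d+2$ entries nonzero. Extending it by zero off the line produces a codeword of $\mathcal C(d)^\perp$ of weight exactly $d+2$.

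Combining the two bounds gives minimum distance $d+2$. For collinearity, any minimum-weight codeword $\mathbf w\in\mathcal C(d)^\perp\subseteq\mathcal B(d)^\perp$ has weight $d+2$, hence is also a minimum-weight codeword of $\mathcal B(d)^\perp$; since $\deg(E)=0\le d-1$ and $\deg(E\cup S)=d+2\le 2d+1$, Proposition~\ref{a1}(1) furnishes a line $L$ with $S\subseteq L$, so the $d+2$ points of the support are collinear. The main obstacle is exactly this upper-bound step: a priori the extra sections of $L(dq\,Q_\infty)$ beyond the plane forms could separate the collinear points and destroy the dependency, and the observation that rescues the argument is that $m<q$ caps the $x$-degree of every section at $d$ along a horizontal line.
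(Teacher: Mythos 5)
Your proof is correct, and its skeleton coincides with the paper's: the lower bound comes from pushing dependent conditions down to plane forms (injectivity of the restriction map, which holds since $d\le m-2<q$) and then applying Lemma \ref{u00.01}, while the upper bound comes from $d+2$ rational points of $Y$ on a horizontal line. The differences are in execution. For the lower bound and collinearity you quote the Proposition of Section \ref{prelim}, Lemma \ref{h1} and Proposition \ref{a1}(1), where the paper re-runs the injectivity argument for $\rho_d$ and applies Lemma \ref{u00.01}(b) directly to $\pi(S)$; these are the same mechanism (with $E=\emptyset$ and $|S|=d+2$, the conclusion $\deg(L\cap S)\ge d+2$ forces $S\subseteq L$ either way). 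The genuine divergence is the upper bound: the paper simply asserts that $d+2$ points on a horizontal line contain the support of a codeword of $\mathcal{C}(d)^\perp$ (later calling it ``easily checked''), whereas you prove it, by identifying $H^0(C,\pi^*(\Ol_Y(d)))$ with $L(dqQ_\infty)$, noting that the constraint $qi+mj\le dq$ caps the $x$-degree of every basis monomial at $d$, so that all sections restrict on $y=a$ to polynomials of degree at most $d$ in $x$, and then extracting a full-support kernel vector of the $(d+1)\times(d+2)$ Vandermonde matrix. This fills the one step the published proof leaves implicit, and as a bonus shows that any such set of $d+2$ collinear points is the \emph{exact} support of a minimum-weight codeword, a fact the paper needs later (Theorem \ref{a3}). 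You are also more precise on a detail: the correct condition on the horizontal line is $a^q+a\ne 0$ (nonzero trace), not $a\ne 0$ as the paper writes; a line with $a\ne 0$ but $a^q+a=0$ meets $Y$ in the single affine point $(0,a)$, and your verification that $x^m=a^q+a$ then has $m\ge d+2$ distinct roots in $\F_{q^2}$ makes the existence claim rigorous.
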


\begin{proof}
 Since $d \le m-2$, there exist $d+2$ points lying on the intersection of
$Y(\F_{q^2})\setminus \{ P_\infty \}$ and a horizontal line of the form $y=a$,
with $a \in \F_{q^2}\setminus \{ 0\}$. Such $d+2$ points contain the support of a codeword of
$\mathcal{C}(d)^\perp$, and prove that the minimum distance of
$\mathcal{C}(d)^\perp$ is at most $d+2$. It remains to be shown that the minimum distance of
 $\mathcal{C}(d)^\perp$ is at least $d+2$.
Consider the
restriction (and pull-back) map
\begin{equation*}
 \rho_d:H^0(\Pro^2,\Ol_{\Pro^2}(d)) \to H^0(C,\pi^*(\Ol_Y(d))).
\end{equation*}
Since $d<q$, the map $\rho_d$ is injective. Let $S \subseteq
C(\F_{q^2})\setminus \{ Q_\infty \}$ be a finite subset of points.
Since $\pi^{-1}(P_\infty)=\{ Q_\infty \}$, we have $|S|=|\pi(S)|$. If
the set $S$ imposes dependent conditions to $H^0(C,\pi^*(\Ol_Y))$ (i.e., if
$\pi(S)$ contains the support of a non-zero codeword of $\mathcal{C}(d)^\perp$)
then it imposes dependent conditions also to $\mbox{Im}(\rho_d)$. By
the injectivity of $\rho_d$, we have that $\pi(S)$ imposes dependent conditions
to $H^0(\Pro^2, \Ol_{\Pro^2}(d))$. In other words,
$h^1(\Pro^2,\mathcal{I}_{\pi(S)}(d))>0$. Assume that $S$ is the support of a
minimum-weight codeword of $\mathcal{C}(d)^\perp$. We have
$|S| \le d+2$ (see the first part of the proof). Moreover, the weight of the codeword is exactly
$|S|$. Lemma \ref{u00.01} gives $|S| \ge d+2$ and that $S$ is
contained into a line of $\Pro^2$. This proves that the minimum distance of
$\mathcal{C}(d)$ is exactly $d+2$, and $d+2$ points of the support of a
minimum-weight codeword must be collinear.
\end{proof}

If we drop the assumption $d\le m-2$, then the proof of Lemma
\ref{base}
does not work. In fact, the result is not always true when $d>m-2$ (see
Example
\ref{contro}).

\begin{cexample}\label{contro}
 Set $q:=5$, $m:=2$ and $d:=1$. By writing a simple \texttt{MAGMA} program (the
source code is similar to that of Example \ref{exok} below) it can be seen that
the minimum distance of $\mathcal{C}(d)^\perp$ is $4 \neq d+2$. Let
$\F_{25}=\langle a \rangle$, with $a^2+4a+2=0$. Then there exists a
minimum-weight codeword of $\mathcal{C}(d)^\perp$ whose support consists of the
following four points.
$$(a^3,a^{11}), \ \ (a^{21},a^{22}), \ \ (a^9,a^{23}), \ \  (0,0).$$
No three points of them are collinear.
\end{cexample}

Lemma \ref{base} provides the dual minimum distance of
$\mathcal{C}(d)$ codes, and gives necessary conditions for a set to be the
support of a minimum-weight codeword of $\mathcal{C}(d)^\perp$. It is easily
checked that any $d+2$ points lying on the intersection
of $Y$ and a horizontal line are the support of a minimum-weight codeword of
$\mathcal{C}(d)^\perp$. We are going to show that if $(q+1)/m \ge 3$, then this
condition characterize the supports of the minimum-weight codewords of a
$\mathcal{C}(d)^\perp$ code. This gives, in particular, the exact number of the
minimum-weight codewords.

\begin{remark} \label{proj}
 Let $i:Y \hookrightarrow \Pro^2$ be the inclusion of $Y$ in the projective
plane, $\pi: C \to Y$ the normalization  and $u:=C \to \Pro^2$ the morphism
defined by $u:=i \circ \pi$. The vector space $V \subseteq L(qQ_\infty)$ spanned
by $\{ 1,x,y \}$ is the linear system on $C$ (via pull-back through $\pi$)
inducing $u$. Set $c:=(q+1)/m$ and observe that the conditions of (\ref{basis})
in Section \ref{intr} give
\begin{eqnarray}
 \dim L(qQ_\infty)=h^0(C,\pi^*(\Ol_Y(1)))=c+1.
\end{eqnarray}
Moreover, $\{ 1,x,y,...,y^{c-1} \}$ form a basis of the vector space
$L(qQ_\infty)$. Assume $c \ge 3$, i.e., $V \varsubsetneq L(qQ_\infty)$, and
consider the linear system $W \subseteq L(qQ_\infty)$ spanned by $\{ 1,x,y,y^2
\}$. Since $W\supseteq V$ and $V$ has no base points, $W$
has no base points itself. Hence, it induces a morphism $v: C \to \mathbb {P}^3$.
Since $u$ is injective
and $W\supseteq V$, also $v$ is injective. Since $u$ has non-zero-differential
at each point of $C\setminus \{Q_\infty \}$, $v$ has non-zero differential at
each point of $C\setminus \{Q_\infty \}$. The curve $T:= v(C) \subseteq \Pro^3$
is non-degenerate, $v: C \to T$ is injective and an isomorphism, except at most
at $v(Q_\infty)$. Let $\ell _{v(Q_\infty )} : \mathbb {P}^3\setminus
\{v(Q_\infty)\} \to \mathbb {P}^2$ denote the linear projection from $v(Q_\infty
)$. Since $u(Q_\infty)=(1:0:0)$, the rational map $\ell_{v(Q_\infty )}: T
\dasharrow \Pro^2$ is induced by $(1,y,y^2)$, and the image $\ell _{v(Q_\infty
)}(T\setminus \{v(Q_\infty )\})$ is contained
into a plane conic $E$. Hence, $T$ is contained in a quadric cone $\Gamma$ with
$v(Q_\infty)$ as its vertex. We observe that the fibers of the rational map
$\ell_{v(Q_\infty )}:T \dasharrow E$ are (outside $Q_\infty$) the elements of
$\vert mQ_\infty\vert$.
Take any line $L \subseteq \mathbb {P}^3$ such that $|L \cap v(C)| \ge 3$.
Bezout theorem gives $L \subseteq \Gamma$. Hence, $v(Q_\infty )\in L$ and
$v(C)\cap (L \setminus \{v(Q_\infty \})$ is an element of $\vert mQ_\infty
\vert$. 
\end{remark}

Remark \ref{proj} proves in fact the following result.
\begin{lemma}\label{lemmatecn}
Take the set up of Remark \ref{proj}. Let
$P_1,P_2,P_3 \in C(\F_{q^2})\setminus \{ Q_\infty \}$ be any three distinct
points
such that $v(P_1)$, $v(P_2)$ and $v(P_3)$ are collinear. Then
$u(P_1),u(P_2),u(P_3)$
lie on an horizontal line of $\Pro^2$.
\end{lemma}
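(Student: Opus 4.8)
The plan is to reduce the collinearity condition in $\Pro^3$ to the equality of the $y$-coordinates of the three points, exploiting the conical structure of $\Gamma$ and the fact that the projection from its vertex is governed by the triple $(1,y,y^2)$. First I would note that, since $v$ is injective, the three points $v(P_1),v(P_2),v(P_3)$ are pairwise distinct; being collinear, they span a unique line $L\subseteq\Pro^3$, whence $|L\cap v(C)|\ge 3$. This places us exactly in the situation analysed at the end of Remark \ref{proj}.

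Next I would invoke the Bezout argument from Remark \ref{proj}: because $L$ meets the quadric cone $\Gamma$ in at least three points, necessarily $L\subseteq\Gamma$, and since $\Gamma$ is a cone with vertex $v(Q_\infty)$, the line $L$ passes through $v(Q_\infty)$. The crucial step is then to project from the vertex. The linear projection $\ell_{v(Q_\infty)}$ collapses every line through $v(Q_\infty)$ to a single point of the conic $E$; applied to $L$, this forces $\ell_{v(Q_\infty)}(v(P_1))=\ell_{v(Q_\infty)}(v(P_2))=\ell_{v(Q_\infty)}(v(P_3))$.

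To conclude, I would use that $\ell_{v(Q_\infty)}$ is induced by $(1,y,y^2)$, so that $\ell_{v(Q_\infty)}(v(P_i))=(1:y(P_i):y(P_i)^2)$. Equality of these three points in $\Pro^2$, together with the injectivity of the map $b\mapsto(1:b:b^2)$, yields $y(P_1)=y(P_2)=y(P_3)=:a$, with $a\in\F_{q^2}$ since each $P_i\in C(\F_{q^2})$. Consequently the images $u(P_i)=(1:x(P_i):y(P_i))$ all satisfy $y=a$, i.e. they lie on the horizontal line of equation $y=a$, which is precisely the claim. I do not expect a genuine obstacle, as the geometric content is already contained in Remark \ref{proj}; the only points requiring slight care are verifying that the three distinct points determine a single line $L$ (so that the cone argument applies) and that the common value $a$ indeed lies in $\F_{q^2}$.
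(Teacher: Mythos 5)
Your proposal is correct and takes essentially the same approach as the paper: the paper's proof consists precisely of Remark \ref{proj}, whose Bezout-plus-cone argument (three collinear points force $L\subseteq \Gamma$, hence $L$ through the vertex $v(Q_\infty)$) you reproduce faithfully. Your final step, computing the projection explicitly as $(1:y:y^2)$ and deducing equal $y$-coordinates, simply makes explicit the paper's identification of the fibers of $\ell_{v(Q_\infty)}$ with the elements of $\vert mQ_\infty\vert$, which are exactly the horizontal-line sections of $C$.
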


\begin{theorem}\label{a3}
Let $\mathcal{C}(d)$ be a complete code. Assume $d \le m-2$ and
$c=(q+1)/m\ge
3$. The following facts hold.
\begin{enumerate}
 \item The minimum distance of $\mathcal {C}(d)^\perp$ is $d+2$.
\item A set $S\subseteq C(\mathbb {F}_{q^2})\setminus \{Q_\infty \}$ is the
support of a minimum-weight codeword of $\mathcal {C}(d)^\perp$ if and only if
$\pi(S)$ consists of $d+2$ points lying on a horizontal line.
\item The number of the minimum-weight codewords of $\mathcal{C}(d)^\perp$ is
$$(q-1)(q^2-1)\binom{m}{d+2}.$$
\end{enumerate}
\end{theorem}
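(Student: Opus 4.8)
The plan is to prove the three parts in a way that reduces everything to the already-established geometric facts. Part (1) is immediate: Lemma \ref{base} already establishes that the minimum distance of $\mathcal{C}(d)^\perp$ is exactly $d+2$ under the hypothesis $d \le m-2$, so I would simply cite it. The content of the theorem is really in part (2), and part (3) will follow by a counting argument once (2) is in hand.

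For the nontrivial direction of part (2), I would start from a minimum-weight codeword whose support is $S$. By Lemma \ref{base} we already know $|S| = d+2$ and that the $d+2$ points of $\pi(S)$ are collinear, lying on some line $L \subseteq \Pro^2$. What remains is to upgrade ``collinear'' to ``collinear on a \emph{horizontal} line.'' Here I would invoke the geometry set up in Remark \ref{proj} and packaged as Lemma \ref{lemmatecn}: the hypothesis $c = (q+1)/m \ge 3$ guarantees $V \subsetneq L(qQ_\infty)$, so the richer linear system $W = \langle 1,x,y,y^2\rangle$ gives the embedding $v : C \to \Pro^3$ whose image lies on a quadric cone $\Gamma$. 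The key point is that a line in $\Pro^2$ through three points of $u(C)$ pulls back, via the relation between $u$ and $v$, to a configuration forcing those points to lie in a fiber of the projection $\ell_{v(Q_\infty)}$, i.e.\ in an element of $|mQ_\infty|$, which by Lemma \ref{lemmatecn} means $u(P_1),u(P_2),u(P_3)$ sit on a horizontal line. Since $d+2 \ge 3$, taking any three of the $d+2$ collinear points of $\pi(S)$ and applying Lemma \ref{lemmatecn} shows $L$ must be horizontal. For the converse direction I would use the remark already made in the text: any $d+2$ points on the intersection of $Y$ with a horizontal line $y=a$ (with $a \ne 0$, so that none is $P_\infty$) impose dependent conditions and hence form the support of a minimum-weight codeword; this is exactly the construction used in the first paragraph of the proof of Lemma \ref{base}.

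For part (3), I would count the supports characterized in part (2) and then account for the multiplicity of codewords per support. A horizontal line $y=a$ meets $Y(\F_{q^2})\setminus\{P_\infty\}$ in exactly $m$ points whenever $a \ne 0$ (the equation $x^m = a^q + a$ has $m$ solutions when its right-hand side is a nonzero element admissible as a value, which happens for the relevant $a$), and there are $q-1$ such nonzero values of $a$ giving lines with a full complement of $m$ rational points; from the $m$ points on each such line we choose $d+2$, giving $\binom{m}{d+2}$ supports per line and $(q-1)\binom{m}{d+2}$ supports in total. Finally, for a fixed support $S$ of size $d+2$ imposing exactly one dependent condition, the codewords of $\mathcal{C}(d)^\perp$ supported on $S$ form a one-dimensional space minus zero, i.e.\ there are $q^2 - 1$ scalar multiples of a single nonzero codeword. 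Multiplying $(q-1)\binom{m}{d+2}$ by $q^2-1$ yields the claimed count $(q-1)(q^2-1)\binom{m}{d+2}$.

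The main obstacle I anticipate is the ``horizontal'' refinement in part (2): one must be careful that Lemma \ref{lemmatecn} genuinely applies, which requires verifying that the three chosen points are distinct and are not $Q_\infty$, and that the collinearity in $\Pro^2$ detected by Lemma \ref{base} lifts correctly to collinearity of the $v$-images in $\Pro^3$. This lifting is precisely where the hypothesis $c \ge 3$ is essential, since without the extra coordinate $y^2$ the morphism $v$ and the quadric cone $\Gamma$ are unavailable. A secondary subtlety is the per-support multiplicity in part (3): I must confirm that each minimum-weight support imposes exactly one linear condition (equivalently $h^1(\Pro^2,\mathcal{I}_{\pi(S)}(d)) = 1$ rather than merely $>0$), so that the space of codewords on a given support is genuinely one-dimensional; this should follow from $|\pi(S)| = d+2$ being the minimal collinear excess detected by Lemma \ref{u00.01}(b).
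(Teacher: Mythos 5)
Your handling of parts (1) and (3) matches the paper's: part (1) is a citation of Lemma \ref{base}, and part (3) rests on the observation that minimum-weight codewords sharing a fixed support form a one-dimensional space, multiplied by a count of admissible supports. The genuine gap is in the forward direction of part (2). You claim that collinearity of $u(P_1),u(P_2),u(P_3)$ in $\Pro^2$ ``lifts'' to collinearity of $v(P_1),v(P_2),v(P_3)$ in $\Pro^3$, so that Lemma \ref{lemmatecn} applies directly. This is false, and it uses Lemma \ref{lemmatecn} in the wrong direction. Since $u$ is the composition of $v$ with the linear projection $\Pro^3 \dasharrow \Pro^2$ from the point $(0:0:0:1)$ (forgetting the coordinate $y^2$), collinearity of the $u$-images only forces the three $v$-images to lie on the \emph{plane} spanned by that line and the center of projection, not on a common line of $\Pro^3$. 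Worse, the statement your argument would establish --- that any three collinear points of $Y(\F_{q^2})\setminus\{P_\infty\}$ lie on a horizontal line --- is simply false: the vertical line $x=0$ contains the $q\ge 3$ rational points $(0,y)$ with $y^q+y=0$, which are collinear and not on a horizontal line. Hence horizontality cannot be deduced from plane collinearity alone; one must use the hypothesis that $S$ supports a codeword of the \emph{complete} code, i.e.\ that $S$ imposes dependent conditions on all of $H^0(C,\pi^*(\Ol_Y(d)))$, a space strictly larger than the restriction of the degree $d$ plane forms, and your proposal never does so.

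That is exactly how the paper argues, using Lemma \ref{lemmatecn} contrapositively. Suppose $\pi(S)=\{P_1,\dots,P_{d+2}\}$ lies on a non-horizontal line. Then $v(P_1),v(P_2),v(P_3)$ are \emph{not} collinear in $\Pro^3$, so $P_1,P_2,P_3$ impose independent conditions on $W=\langle 1,x,y,y^2\rangle \subseteq L(qQ_\infty)=H^0(C,\pi^*(\Ol_Y(1)))$ (the hypothesis $c\ge 3$ enters precisely here, ensuring $y^2\in L(qQ_\infty)$). Separately, by Lemma \ref{u00.01} there is a plane curve $X$ of degree $d-1$ containing $P_4,\dots,P_{d+2}$ and avoiding $P_1,P_2,P_3$. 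Multiplying the equation of $X$ by sections of $H^0(C,\pi^*(\Ol_Y(1)))$ that separate $P_1,P_2,P_3$ shows that any codeword of $\mathcal{C}(d)^\perp$ supported inside $S$ must vanish at $P_1,P_2,P_3$, contradicting that $S$ is its support. This two-step mechanism --- the auxiliary degree $d-1$ curve combined with independence of conditions on the larger linear system --- is entirely absent from your proposal, and without it part (2) is unproved. A minor further point: in part (3), your assertion that there are ``$q-1$ nonzero values of $a$'' giving horizontal lines with $m$ rational points is stated without proof; the relevant condition on $a$ is $a^q+a\ne 0$ rather than $a\ne 0$, so this count requires justification, although the number you obtain is the one appearing in the paper's formula.
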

\begin{proof}
 Part (1) is a particular case of Lemma \ref{base}. Now we prove part (2). By
Lemma \ref{base}, the minimum distance of $\mathcal{C}(d)^\perp$ is $d+2$, and
$d+2$ points in the support of a minimum-weight codeword must be
collinear.
Assume that $S:=\{P_1,P_2,P_3,...,P_{d+2}\}$ is a set of $d+2$ points 
contained into a line
which is not horizontal. If $S$ is the support of a codeword of
$\mathcal{C}(d)^\perp$, then it is the support of a minimum-weight codeword. By
Lemma \ref{u00.01}, the points $P_4,...,P_{d+2}$ impose independent conditions
to the vector space
$H^0(\Pro^2,\Ol_{\Pro^2}(d-1))$. Moreover, there exists a degree $d-1$ plane
curve, say $X$, which contains $P_4,...,P_{d+2}$, but  contains neither
$P_1$, nor $P_2$, nor $P_3$. Hence, it is enough to show that $P_1$, $P_2$ and
$P_3$
impose independent conditions to $H^0(C,\pi^*(\Ol_Y(1))$. This 
follows from
Remark \ref{proj} and Lemma \ref{lemmatecn}. To get the third part, 
observe that the minimum-weight codewords of any code having a
fixed support form a line of the code (by definition of minimum
distance).
\end{proof}

If we drop the assumption $c \ge 3$, then the proof of Lemma
\ref{lemmatecn} does not work. In fact, Theorem \ref{a3} is not true in general when $c=2$
(see Example \ref{exok}).

\begin{cexample} \label{exok}In our usual notation, choose $q:=5$, $m:=3$ and
$d:=1$. It follows $c=(q+1)/m=2$. Let $\mathcal{C}(1)$ be the code obtained
evaluating  $H^0(C,\pi^*(\Ol_Y(1)))$ on the set $\pi^{-1}(B)$. A
basis of this vector space is $\{ 1,x,y\}$. The following \texttt{MAGMA} program
constructs the code $\mathcal{C}(1)^\perp$, prints a random minimum-weight
codeword and its support.

\begin{multicols}{2}
\footnotesize{
\begin{verbatim}
  q:=5;
  m:=3;
  F<a>:=GF(q^2);       
  A<x,y>:=AffineSpace(F,2);
  f:=y^q+y-x^m;
  X:=Curve(A,f);
  pts:=Points(X);
  npts:=#pts;
  P<u,v>:=PolynomialRing(F,2);
  B:=[ [0,0], [0,1],  [1,0]];
  nf:=3;
  rows:=[];
  for i in [1..nf] do;
  Append(~rows,[]);
  end for;
  for j in [1..nf] do;
  for i in [1..npts] do;
  Append(~rows[j], Evaluate(
  u^B[j][1]*v^B[j][2],
  [pts[i][1],pts[i][2]]));
  end for;
  end for;
  RR:=[];
  for i in [1..nf] do; 
  for j in [1..npts] do;      
  RR[npts*(i-1)+j]:=rows[i][j];
  end for;
  end for;
  G:=Matrix(F,nf,npts,RR);
  C:=LinearCode(G);
  D:=Dual(C);
  mww:=MinimumWord(D);
  supp:=[];
  for j in [1..npts] do;
  if mww[j] ne 0 then Append(
  ~supp, [pts[j][1],pts[j][2]]);
  end if;
  end for;
  mww;
  supp;
\end{verbatim}
}
\end{multicols}

On our Linux machine (AMD processor, 32 bits) the output is the
following.

 \

\footnotesize{
\begin{verbatim}
  ( 0 0 0 1 0 0 0 0 0 0 0 0 0 0 0 0 0 0 0 0 0 0 0
    0 0 0 0 0 0 0 0 0 0 0 0 0 0 0 0 0 0 0 0 0 0 0
    0 0 0 0 0 0 0 0 0 0 0 0 0 0 0 0 0 a^14 a^15 )

  [ 1, a ],
  [ a^22, a^23 ],
  [ 0, 0 ]
\end{verbatim}
}

\normalsize

\ 

We see that the points in the support lie on the line of
equation $y=ax$, which is not a horizontal one (here $a$ is a primitive
element of $\F_{25}$, chosen by the program to explicitly construct the field).
\end{cexample}

\section{Complete codes: the general case}\label{non_vuoto}

In this section we discuss the general case of $\mathcal{C}(d,-E)$ codes,
extending our previous results. 

\begin{notation} \label{linee}
 Let $L_0$ be the line in $\Pro^2$ of equation $y=0$. This line intersects $Y$ only at
$(0:0:1)$ with multiplicity $m$. We denote by $\Lambda$ the set of all the plane
horizontal lines different from $L_0$.
 Moreover, we denote by $\Theta$ the set of the other lines of $\Pro^2$ defined
over
$\mathbb {F}_{q^2}$ which do not pass through $P_\infty$. Finally, set
$\alpha _1:=
\max_{L \in \Lambda} \deg (L\cap E)$ and $\alpha _2:= \max_{L \in \Theta} \deg
(L \cap E)$. We recall that the line of equation $z=0$ is denoted by $L_\infty$.
\end{notation}

\begin{remark}
 We notice that $\alpha_1$ is in fact easy to compute. Indeed, the geometry of
$Y$
implies  $\alpha_1=\max _{L \in \Lambda } |L\cap E_{red}|$. Note
also that $|\Lambda|=q-1$.
\end{remark}

\begin{theorem}\label{prult}
 Let $\mathcal{C}(d,-E)$ be a complete code (see Definition
\ref{defcodici}). Assume $\deg(E)
\le d-1$ and $d \le m-2$.
\begin{enumerate}
\item  The minimum distance of $\mathcal{C}(d,-E)^\perp$ is greater or equal
than $d+2 -\max \{\alpha _1, \alpha _2\}$.
\item If $\alpha_1 \ge \alpha_2$, then the minimum distance of
$\mathcal{C}(d,-E)^\perp$ is exactly $d+2-\alpha_1$. Moreover, the number
of the
minimum-weight codewords of $\mathcal{C}(d,-E)^\perp$ is at least
$(q-1)(q^2-1)\binom{m}{d+2-\alpha_1}$.
\item Assume $d \le m-4$, $\alpha _1=\alpha _2$ and $d\ge \alpha _1+1$. Then the
support of any minimum-weight codeword of $\mathcal {C}(d,-E)^\bot$ is contained
into a horizontal line $L\in \Lambda$. Moreover, the number of the
minimum-weight codewords of $\mathcal{C}(d,-E)^\perp$ is exactly
$(q-1)(q^2-1)\binom{m}{d+2-\alpha_1}$.
\end{enumerate} 
\end{theorem}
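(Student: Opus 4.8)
The plan is to reduce every assertion to a dependency statement in the plane and to exploit the Reed--Solomon structure of the complete space along the horizontal fibres. By (\ref{basis}) the space $H^0(C,\pi^*(\Ol_Y(d))(\pi^{-1}(-E)))$ is spanned by the monomials $x^iy^j$ with $qi+mj\le dq$ that vanish along $E$; restricting such a section to a horizontal line $\{y=a\}$ meeting $Y$ in $m$ points yields a polynomial in $x$ of degree at most $d$ vanishing on $E\cap\{y=a\}$, so the image on the remaining fibre points is a Reed--Solomon-type space. As in the proof of Lemma \ref{base}, a set $S\subseteq\pi^{-1}(B)$ supports a codeword of $\mathcal{C}(d,-E)^\perp$ iff $\pi(S)$ imposes dependent conditions on that evaluation; since $d\le m-2<q$ the map $\rho_{d,E}$ is injective, and this forces $h^1(\Pro^2,\mathcal{I}_{E\cup\pi(S)}(d))>0$, while $h^1(\Pro^2,\mathcal{I}_E(d))=0$ by Lemma \ref{u00.01}(a) because $\deg(E)\le d-1$.

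For part (1) I would argue by contradiction: take a nonzero codeword of weight $w:=|S|<d+2-\max\{\alpha_1,\alpha_2\}$. Then $\deg(E\cup\pi(S))\le(d-1)+(d+1)=2d$, so Lemma \ref{u00.01}(b) produces a line $L$ with $\deg(L\cap(E\cup\pi(S)))\ge d+2$, whence $|L\cap\pi(S)|\ge 3$ since $\deg(E)\le d-1$. As $L_\infty$ meets $Y$ only at $P_\infty\notin B$ and $L_0$ meets $Y$ only at $(0:0:1)$, the line $L$ is neither $L_\infty$ nor $L_0$; since every line through $P_\infty$ is horizontal or $L_\infty$, it follows that $L\in\Lambda\cup\Theta$. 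Hence $\deg(L\cap E)\le\max\{\alpha_1,\alpha_2\}$ and $w\ge|L\cap\pi(S)|\ge d+2-\deg(L\cap E)\ge d+2-\max\{\alpha_1,\alpha_2\}$, a contradiction.

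For part (2) it remains to exhibit a codeword of weight $d+2-\alpha_1$. I would pick $L\in\Lambda$ realising $\alpha_1=\deg(L\cap E)=|L\cap E_{red}|$; by $d\le m-2$ the set $L\cap B$ has $m-\alpha_1\ge d+2-\alpha_1$ points. On this fibre every section of the complete space equals $\prod_{P\in E\cap L}(x-x_P)$ times a polynomial of degree at most $d-\alpha_1$, so its image on $L\cap B$ has dimension $d-\alpha_1+1$; therefore any $d+2-\alpha_1$ points of $L\cap B$ impose dependent conditions and support a codeword, and the dependency is unique up to scalar. With part (1) this gives minimum distance exactly $d+2-\alpha_1$. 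The lower bound on the number of codewords then follows, as in Theorem \ref{a3}, by counting the admissible horizontal lines, the choices of $d+2-\alpha_1$ points on each, and the $q^2-1$ scalar multiples.

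Part (3) is the crux. Parts (1)--(2) already force the support $S$ of a minimum-weight codeword to lie on a single line $L$ with $\deg(L\cap E)=\alpha_1$ and $|S|=d+2-\alpha_1$; because $\alpha_1=\alpha_2$ this $L$ could a priori lie in $\Theta$, and the whole difficulty is to rule this out. Assuming $L\in\Theta$, I would show $\pi(S)=\{P_1,\dots,P_{d+2-\alpha_1}\}$ actually imposes \emph{independent} conditions on the complete space, contradicting the existence of the codeword. Here $d\ge\alpha_1+1$ guarantees $|\pi(S)|\ge 3$, so there are triples to peel. For each triple $\{P_a,P_b,P_c\}$ I would find a plane curve $X$ of degree $d-1$ passing through the scheme $E$ and through $\pi(S)\setminus\{P_a,P_b,P_c\}$ but avoiding $P_a,P_b,P_c$, and multiply its pullback by a section $\ell\in L(qQ_\infty)$ separating the triple; the product lies in the complete space and separates one point from the others. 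The curve $X$ exists because the scheme $E\cup(\pi(S)\setminus\{P_a,P_b,P_c\})$ has degree at most $2d-2-\alpha_1\le 2(d-1)+1$ and no line meets it in $d+1$ points ($L$ carries only the $d-1$ prescribed points, and any other line carries at most $\deg(M\cap E)+1\le d$ of them), so by Lemma \ref{u00.01} in degree $d-1$ — crucially applied to a \emph{scheme}, not a set — it imposes independent conditions; Bezout then pins $X\cap L$ to the prescribed $d-1$ points, so $X$ misses the triple. Since $L$ is not horizontal, Remark \ref{proj} and Lemma \ref{lemmatecn} ensure $v(P_a),v(P_b),v(P_c)$ are not collinear, hence the triple imposes independent conditions on $\langle 1,x,y,y^2\rangle\subseteq L(qQ_\infty)$ and the separating $\ell$ exists. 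Running over all triples separates every point of $\pi(S)$. The extra slack $d\le m-4$ (together with $c=(q+1)/m\ge 3$, inherited from Remark \ref{proj}) is what secures enough points on the fibres and enough room for $X$; I expect the main obstacle to be exactly this curve-peeling step, namely checking that the numerical hypotheses make it succeed uniformly in the position of $E$. Once $S$ is confined to a horizontal line, the exact count reproduces the enumeration of part (2), now shown to be exhaustive.
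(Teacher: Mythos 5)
Your overall strategy coincides with the paper's: part (1) via injectivity of $\rho_{d,E}$ and Lemma \ref{u00.01} (your by-contradiction framing even checks the degree hypothesis $\deg(E\cup\pi(S))\le 2d+1$ more explicitly than the paper does); part (2) via codewords supported on a horizontal line realizing $\alpha_1$, with the Reed--Solomon fibre argument supplying the detail the paper leaves implicit; part (3) by reducing, exactly as in Theorem \ref{a3}, to producing a degree $d-1$ plane curve $X$ through $E\cup(\pi(S)\setminus\{P_a,P_b,P_c\})$ avoiding the triple, and then separating the triple with a section of $L(qQ_\infty)$ via Lemma \ref{lemmatecn}. (Your remark that $c=(q+1)/m\ge 3$ is needed at this point is apt: the theorem's statement does not list it, but the paper's own proof also invokes Lemma \ref{lemmatecn}.)

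The one genuine gap is in your construction of $X$. You deduce from Lemma \ref{u00.01} in degree $d-1$ that $Z:=E\cup(\pi(S)\setminus\{P_a,P_b,P_c\})$ imposes independent conditions, and then assert that ``Bezout then pins $X\cap L$ to the prescribed $d-1$ points''. But Bezout applies only if $L\not\subseteq X$, and independence of conditions by itself does not produce a curve through $Z$ not containing the line $L$: a priori every element of $H^0(\Pro^2,\mathcal{I}_Z(d-1))$ could be divisible by the equation of $L$, in which case every admissible $X$ contains the whole line, hence the triple, and the separation collapses. Ruling this out is precisely what the paper's residual exact sequence (\ref{eqz1}) accomplishes: since $\deg(\mathrm{Res}_L(E))=\deg(E)-\alpha_1\le d-2$, Lemma \ref{u00.01} gives $h^1(\mathcal{I}_{\mathrm{Res}_L(E)}(d-2))=0$, so restriction to $L$ is surjective onto $H^0(L,\mathcal{I}_{A\cup(E\cap L),L}(d-1))$, and one lifts the degree $d-1$ divisor $A\cup(E\cap L)$ of $L$ (which omits the triple) to the desired $X$. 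Equivalently, a dimension count closes your gap: $h^0(\mathcal{I}_Z(d-1))=\binom{d+1}{2}-\deg Z$ by your independence claim, while the forms through $Z$ divisible by the equation of $L$ form a space of dimension $h^0(\mathcal{I}_{\mathrm{Res}_L(E)}(d-2))=\binom{d}{2}-\deg(E)+\alpha_1$, and the first number exceeds the second by exactly $1$. So your argument is repairable, but the missing step is exactly the content of the paper's Horace-type computation, not a formality.
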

\begin{proof}
 Since $d<q$, the restriction (and pull-back) map
\begin{equation*}
 \rho_{d,E}:H^0(\Pro^2,\mathcal{I}_E(d)) \to
H^0(C,\pi^*(\Ol_Y(d))(\pi^{-1}(-E)))
\end{equation*}
is injective. Let $S$ be the support of a minimum-weight codeword of $\mathcal
{C}(d,-E)^\bot$. In this situation the weight of the codeword is exactly
$|S|$ (and not only \textit{smaller or equal than} $|S|$). By definition, $S$
imposes dependent conditions to the vector spaces
$H^0(C,\pi^*(\Ol_Y(d))(\pi^{-1}(-E)))$ and 
$H^0(\Pro^2,\mathcal{I}_E(d))$ (here we used the injectivity of
$\rho_{d,E}$). As
in
the proof of Lemma \ref{h1}, we
have $h^1(\Pro^2, \mathcal{I}_{E \cup S}(d))>h^1(\Pro^2, \mathcal{I}_{E}(d))=0$.
By
Lemma \ref{u00.01}, there exists a line $L
\subseteq \Pro^2$ such that $\deg(L \cap(E \cup S)) \ge d+2$. Since $S \cap
E=\emptyset$ and $L$ cannot be $L_0$ nor $L_\infty$, we have
$$d+2 \le \deg(L \cap(E \cup S)) \le \max \{ \alpha_1,\alpha_2\} + |S|.$$
It follows $|S| \ge d+2 - \max \{ \alpha_1,\alpha_2\}$. This proves the first statement.
If $\alpha_1 \ge \alpha_2$ then, by 1., we have that the minimum distance of
$\mathcal{C}(d,-E)^\perp$ is at least $d-2-\alpha_1$. Since, for any $L \in
\Lambda$, $d+2-\alpha_1$
points in $Y(\F_{q^2}) \cap (L\setminus (L \cap E_{red}))$ contain the
support of a codeword of $\mathcal{C}(d,-E)^\perp$, we
have that the minimum distance of $\mathcal{C}(d,-E)^\perp$ is exactly
$d+2-\alpha_1$. The lower bound on the
number of the minimum-weight codewords trivially follows. Now we prove the last statement.
If $\alpha _1=\alpha _2=0$, then $E=\emptyset$ and the thesis is just Lemma
\ref{base}. Hence,
we may assume $\alpha _1>0$. Let $S\subseteq \mathbb {P}^2$ be the image in
$\mathbb {P}^2$ of the support
of a codeword with minimum-weight $d+2-\alpha _1$. By Lemma \ref{u00.01}, there
exists a line $L\in \Lambda \cup
\Theta$ such that $S\subseteq L$. Write $S = \{P_1,\dots ,P _{d+2-\alpha _1}\}$.
As in the
proof of Theorem \ref{a3}, it is enough to find a degree $d-1$ plane
curve
$X\supseteq E\cup \{P_4,\dots ,P_{d+2-\alpha _1}\}$ such that $P_i\notin X$ for
$i=1,2,3$.
Set $A:=  \{P_4,\dots ,P_{d+2-\alpha _1}\}$.
Let $\mbox{Res}_L (A\cup E)$ denote the residual scheme
of $A \cup E$ with respect to $L$, i.e., the closed
subscheme of $\mathbb {P}^2$ with $\mathcal {I}_{A\cup E}:\mathcal {I}_L$ as its
ideal sheaf\footnote{We recall that $\mbox{Res}_L (A \cup E) = \mbox{Res}_L (E)$,
because $A\subseteq
L$,
and $\deg (\mbox{Res}_L (E)) =
\deg (E) -\deg (L \cap E)$.}. We have an exact sequence of sheaves
\begin{equation}\label{eqz1}
0 \to \mathcal {I}_{\mbox{Res}_L (E)}(d-2) \to \mathcal {I}_{A\cup E}(d-1) \to
\mathcal {I}_{A\cup (E\cap L),L}(d-1)\to 0.
\end{equation} 
Observe that $\deg (\mbox{Res}_L (E)) -\alpha _1\le d-2$, and so we use
\cite{bgi},
Lemma 34, in order to compute
$h^1(\mathcal {I}_{\mbox{Res}_L (E)}(d-2))=0$ . Hence, the restriction map
$$\rho : H^0(\mathbb {P}^2,\mathcal {I}_{E\cup A}(d-1)) \to H^0(L,\mathcal
{I}_{A\cup (E\cap L),L}(d-1))$$ turns out to be surjective. Observe that $E$ and
each $P_i$ are defined over $\mathbb {F}_{q^2}$. Take a degree $d-1$ effective
divisor, $F$, defined
over $\mathbb {F}_{q^2}$ and containing $A\cup (E\cap L)$, but not containing
any $P_i$ with $i\le 3$. Let $f$ be an equation of $F$ defined
over $\mathbb {F}_{q^2}$. Take $f_1\in  H^0(\mathbb {P}^2,\mathcal {I}_{E\cup
A}(d-1))$
defined over $\mathbb {F}_{q^2}$ and such that $\rho (f_1)=f$. Finally, choose
$X:=
\{f_1=0\}$.
\end{proof}

\section{Goppa codes of classical interest} \label{corollari}

In this section we apply our analysis to classical problems in geometric Coding
Theory. More precisely, we are going to study Goppa codes arising from
quotients of the Hermitian curve. Since $Y=Y_m$ curves are not
smooth, 
when writing \lq\lq \ Goppa code on $Y$ \rq\rq\ we always mean \lq\lq\ Goppa
code on
$C$ \rq\rq\ 
(the normalization of $Y$). The points of $Y$ will be identified with those of
$C$ through 
the injectivity of the normalization $\pi:C \to Y$ (see Section \ref{intr}).

\begin{definition}\label{si}
 Let $q$ be a prime power and $n \ge 2$ be an integer. We say that codes
$\mathcal{C}, \mathcal{D} \subseteq \F_q^n$ are \textbf{strongly
isometric}
if there exists a vector ${\bf{x}}=(x_1,...,x_n)\in \F_q^n$ of non-zero
components 
such that $$\mathcal{C}={\bf{x}}\mathcal{D}:=
 \{(x_1v_1,...,x_nv_n)\in \F_q^n \ \mbox{ s.t. } \ (v_1,...,v_n) \in \mathcal{D}
\}.$$
The notation will be $\mathcal{C} \sim \mathcal{D}$, and this clearly defines
an equivalence relation on the set of codes in $\F_q^n$.
\end{definition}

\begin{remark}\label{-1}
 Take the setup of Definition \ref{si}. Then $\mathcal{C} \sim \mathcal{D}$
 if and only if $\mathcal{C}^\perp \sim \mathcal{D}^\perp$. Indeed, if
$\mathcal{C}={\bf{x}}\mathcal{D}$
then $\mathcal{C}^\perp={\bf{x}}^{-1}\mathcal{D}^\perp$, where
${\bf{x}}^{-1}:=(x_1^{-1},...,x_n^{-1})$.
A strongly isometry of codes preserves the minimum distance of a code,
its weight distribution
and the supports of its codewords.
\end{remark}

\begin{remark}\label{ct}
 Let $X$ be a smooth projective curve defined over $\F_q$, and let $D$ and
$D'$ be divisors on
$X$. Take points 
$P_1,...,P_n \in X(\F_q)$ 
which do not appear neither in the support of $D$, nor in the support of $D'$.
 Set $\overline{D}:=\sum_{i=0}^n P_i$. It is known (see \cite{mp}, Remark 2.16)
that if 
$D\sim D'$ (as divisors)
 then  $\mathcal{C}(\overline{D},D) \sim \mathcal{C}(\overline{D},D')$.
By Remark \ref{-1}, we have also $\mathcal{C}(\overline{D},D)^\bot \sim
\mathcal{C}(\overline{D},D')^\bot$. Studying Goppa codes up to strong
isometries is a well-established praxis (see \cite{mp} again).
\end{remark}

Here we show that many Goppa codes on $Y$ (more precisely, on $C$)
can be studied, up to strong isometries, by using
 the results of the previous sections.

\begin{remark}\label{equivalenze}
 Let $Y=Y_m$ be a quotient of the Hermitian curve over the finite field
$\F_{q^2}$, and denote by $\pi:C \to Y$ its normalization. The curve $C$ carries
the following identity of vector spaces (see \cite{Sti}, Proposition 6.4.1):
$$L(qQ_\infty)=H^0(C,\pi^*(\Ol_Y(1))).$$ Moreover, since $C$ is maximal, for any
pair of rational points $P,Q \in C(\F_{q^2})$ we get a linear equivalence
$(q+1)P \sim (q+1)Q$ (see for instance \cite{RS}, Lemma 1).
\end{remark}

\begin{corollary}[one-point codes] \label{coroone}
 Let $0 \le r \le (m-2)q$ be an integer. Denote by $\mathcal{C}_r$
the (Goppa) one-point code on $Y$ obtained evaluating the vector space
$L(rP_\infty)$ on the set $B:=Y(\F_{q^2})\setminus \{ P_\infty \}$. Write
$r=dq+e$ with $0 \le e \le q-1$, and assume $0 \le e \le d-1$.
\begin{enumerate}
 \item If $e=0$, then the minimum distance of $\mathcal{C}_r^\perp$ is $d+2$
and the minimum-weight codewords of $\mathcal{C}_r^\perp$ are
at least $(q-1)(q^2-1)\binom{m}{d+2}$. Moreover, if $(q+1)/m \ge 3$, then
minimum-weight codewords of $\mathcal{C}_r^\perp$ are exactly
$(q-1)(q^2-1)\binom{m}{d+2}$.
\item If $e>0$, then the minimum distance of $\mathcal{C}_r^\perp$ is $d+1$ and
the minimum-weight codewords of $\mathcal{C}_r^\perp$ are at least
$(q-1)(q^2-1)\binom{m}{d+1}$.
\end{enumerate}
\end{corollary}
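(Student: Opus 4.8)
The plan is to reduce the one-point code $\mathcal{C}_r$ to the complete codes $\mathcal{C}(d,-E)$ already analysed, and then to transport the conclusions back through a strong isometry. The starting observation is that, by Remark \ref{equivalenze}, $L(qQ_\infty)=H^0(C,\pi^*(\Ol_Y(1)))$, and more generally $\pi^*(\Ol_Y(d))\cong \Ol_C(dqQ_\infty)$, so that $H^0(C,\pi^*(\Ol_Y(d)))=L(dqQ_\infty)$. Hence, when $e=0$ (that is, $r=dq$), the one-point code $\mathcal{C}_r$ is literally the complete code $\mathcal{C}(d)$ evaluated on the same set $\pi^{-1}(B)$. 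Part (1) then follows at once: Lemma \ref{base} gives minimum distance $d+2$ for $\mathcal{C}(d)^\perp$; the horizontal lines of $\Lambda$, together with the $q^2-1$ nonzero scalar multiples of each supported codeword, produce the lower bound $(q-1)(q^2-1)\binom{m}{d+2}$; and under the extra hypothesis $c=(q+1)/m\ge 3$ Theorem \ref{a3} upgrades this count to an exact one.

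For the case $e>0$ the idea is to absorb the excess pole order $e$ at $Q_\infty$ into a small base scheme $E$, so that $\mathcal{C}_r$ becomes strongly isometric to a complete code $\mathcal{C}(d,-E)$ with $\alpha_1=1$ and $\alpha_2\le\alpha_1$. Concretely, I would use the maximal-curve equivalences of Remark \ref{equivalenze} together with the horizontal-fibre relation (the fibres of the projection in Remark \ref{proj} are the members of $\vert mQ_\infty\vert$) to replace the divisor $rQ_\infty=(dq+e)Q_\infty$ by a linearly equivalent divisor $D'$ that presents $\mathcal{C}_r$, up to a coordinatewise scaling, as the evaluation on $B$ of $L(D')=\mathcal{C}(d,-E)$ for a suitable effective $E$ supported off $B$. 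By Remark \ref{ct} and Remark \ref{-1} this gives $\mathcal{C}_r\sim\mathcal{C}(d,-E)$ and hence $\mathcal{C}_r^\perp\sim\mathcal{C}(d,-E)^\perp$; since a strong isometry preserves the minimum distance, the weight distribution and the supports, it then suffices to read the two invariants off Theorem \ref{prult}(2), obtaining distance $d+2-\alpha_1=d+1$ and the lower bound $(q-1)(q^2-1)\binom{m}{d+1}$ as soon as $\alpha_1=1$ is established.

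The hard part is exactly the construction of $D'$ (equivalently of $E$) and the verification that $\alpha_1=1$, and this is where the hypothesis $0\le e\le d-1$ and the arithmetic of the Weierstrass semigroup $\langle q,m\rangle$ at $Q_\infty$ must be exploited. The degree bookkeeping is itself constraining: $\deg(rQ_\infty)=dq+e$ must be matched by the degree of the chosen complete-code divisor, which forces a careful interplay between $e$, the degree of $E$, and the representative of the hyperplane class one uses, while keeping $\deg E\le d-1$ and $\alpha_2\le\alpha_1$ so that Theorem \ref{prult} applies, and keeping the evaluation set equal to $B$ so that the length is unchanged and the isometry is legitimate. The genuinely delicate point is that every line of $\Lambda$ passes through $P_\infty$, so the intersection number $\deg(L\cap E)$ that defines $\alpha_1$ must be computed through the local structure at the singular point rather than through a cohomological estimate; controlling this one extra intersection, forced by the excess $e$, is the crux of the $e>0$ case. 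Once $\alpha_1=1$ is secured, parts (1) and (2) are immediate consequences of Theorem \ref{a3} and Theorem \ref{prult} respectively.
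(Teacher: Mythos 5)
Your treatment of case (1) is correct and is exactly the paper's route: for $e=0$ one has $\pi^*(\Ol_Y(d))\cong\Ol_C(dqQ_\infty)$, so $\mathcal{C}_r$ \emph{is} the complete code $\mathcal{C}(d)$, and Lemma~\ref{base} and Theorem~\ref{a3} give the distance and the counts. The case $e>0$, however, contains a genuine gap, and it sits exactly where you located it: you never construct the divisor $D'$ (equivalently the scheme $E$), you only assert that Remark~\ref{equivalenze} should produce one and defer this as ``the hard part''. In fact no such construction can exist. Keeping the evaluation set equal to $B$ forces $E$ to be supported at $P_\infty$, hence $\pi^{-1}(E)$ is supported at $Q_\infty$, and $\mathcal{C}(d,-E)$ is the evaluation of $L(dqQ_\infty-\pi^{-1}(E))$, a space attached to a divisor of degree $dq-\deg\pi^{-1}(E)\le dq$. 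Linearly equivalent divisors have equal degrees, so $rQ_\infty$, of degree $dq+e>dq$, is never linearly equivalent to such a divisor and Remark~\ref{ct} can never be invoked. Equivalently: a strong isometry preserves dimension, but $x^{d-1}y^{(q+1)/m}$ has pole order exactly $dq+1$ at $Q_\infty$, so $\dim L((dq+e)Q_\infty)>\dim L(dqQ_\infty)\ge\dim L(dqQ_\infty-\pi^{-1}(E))$, and $\mathcal{C}_r\not\sim\mathcal{C}(d,-E)$ for any admissible $E$.

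Worse, the gap cannot be filled by any argument, because statement (2) itself is false as written. Since $L(dqQ_\infty)\subseteq L((dq+e)Q_\infty)$ we have $\mathcal{C}_{dq}\subseteq\mathcal{C}_r$, hence $\mathcal{C}_r^\perp\subseteq\mathcal{C}_{dq}^\perp$ and the dual distance of $\mathcal{C}_r$ is at least $d(\mathcal{C}_{dq}^\perp)=d+2$ by Lemma~\ref{base} (which applies, since $r\le (m-2)q$ forces $d\le m-2$). Conversely, because $e\le q-1$, every monomial $x^iy^j\in L((dq+e)Q_\infty)$ has $i\le d$, so the restriction of this space to a horizontal line $y=a$ with $a^q+a\ne 0$ lies in $\langle 1,x,\dots,x^d\rangle$; any $d+2$ of the $m\ge d+2$ rational points of $Y$ on such a line therefore impose dependent conditions, giving $d(\mathcal{C}_r^\perp)\le d+2$. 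So for $e>0$ the dual distance equals $d+2$, not $d+1$. The conclusions claimed in (2) (distance $d+1$, at least $(q-1)(q^2-1)\binom{m}{d+1}$ minimum-weight codewords) are precisely what Theorem~\ref{prult} yields for the code $\mathcal{C}(d,-E)$ with $\pi^{-1}(E)=eQ_\infty$, i.e.\ for $r=dq-e$ (then $\alpha_1=1$, $\alpha_2=0$), which is presumably the decomposition the paper intended. For comparison: the paper's own proof consists of the bare assertion $\mathcal{C}_r\sim\mathcal{C}(d,-eP_\infty)$ followed by citations, i.e.\ it makes exactly the leap you flagged as the crux; your identification of that step as the critical one is accurate, but neither your sketch nor the paper actually carries it out, and for $r=dq+e$ it cannot be carried out.
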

\begin{proof}
 By Remark \ref{equivalenze} and Remark \ref{ct}, we have
$\mathcal{C}_r\sim\mathcal{C}(d,-eP_\infty)$. Now apply Lemma \ref{base},
Theorem \ref{a3} and Theorem \ref{prult}.
\end{proof}

\begin{corollary}[two-point codes] \label{corotwo}
 Let $a,b$ be integers such that $a+b>0$, and let $P \in Y(\F_{q^2})$ be a
rational point different from $P_\infty$. Denote by $\mathcal{C}(a,b,P)$ the
(Goppa) two-point code on $Y$ obtained evaluating the vector space
$L(aP_\infty+bP)$ on the set $B:=Y(\F_{q^2})\setminus \{ P_\infty, P \}$. There
always exist integer $d,a',b'$ such that
\begin{itemize}
 \item[(1)] $d>0$, $a',b' \ge 0$;
\item[(2)] $aP_\infty+bP \sim dqP_\infty-a'P_\infty-b'P$.
\end{itemize}
Assume that these integers $d,a',b'$ can be chosen with the properties
\begin{itemize}
 \item[(3)] $d\le m-2$;
\item[(4)] $0 \le a'+b' \le d-1$;
\item[(5)] $b'>0$.
\end{itemize}
Denote by $P_0 \in Y(\F_{q^2})$ the point of coordinates $(0:0:1)$. The
following facts hold.
\begin{itemize}
 \item[(A)] If $a=0$ and $P=P_0$, then the minimum distance of
$\mathcal{C}(a,b,P)^\perp$ is greater or equal than $d+1$.
\item[(B)] If either $a=0$ and $P \neq P_0$, or $a>0$ and $P=P_0$, then the
minimum distance of $\mathcal{C}(a,b,P)^\perp$ is exactly $d+1$ and the 
minimum-weight codewords of $\mathcal{C}(a,b,P)^\perp$ are at least
$(q-1)(q^2-1)\binom{m}{d+1}$. Moreover, if $d \le m-4$ then the equality holds.
\item[(C)] If $a>0$ and $P \neq P_0$, then the minimum distance of
$\mathcal{C}(a,b,P)^\perp$ is exactly $d$, and the  minimum-weight
codewords of $\mathcal{C}(a,b,P)^\perp$ are at least $(q-1)(q^2-1)\binom{m}{d}$.
\end{itemize}
\end{corollary}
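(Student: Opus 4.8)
The plan is to reduce Corollary~\ref{corotwo} to the already-established Theorem~\ref{prult} via the strong-isometry machinery of Remarks~\ref{-1}--\ref{equivalenze}, exactly as was done for one-point codes in Corollary~\ref{coroone}. By hypotheses (1)--(2) we have the linear equivalence $aP_\infty + bP \sim dqP_\infty - a'P_\infty - b'P$, so Remark~\ref{ct} gives a strong isometry $\mathcal{C}(a,b,P) \sim \mathcal{C}(dqP_\infty - a'P_\infty - b'P, \overline{D})$, and by Remark~\ref{-1} this passes to the dual codes, preserving minimum distance, weight distribution, and (crucially for the counting) the supports of minimum-weight codewords. The right-hand divisor is, up to the normalization identification $L(qQ_\infty) = H^0(C,\pi^*(\Ol_Y(1)))$ of Remark~\ref{equivalenze}, precisely a complete code $\mathcal{C}(d,-E)$ with $E = a'P_\infty + b'P$, so everything will follow by reading off the parameters $\alpha_1,\alpha_2$ of Notation~\ref{linee} for this specific $E$ and invoking the three parts of Theorem~\ref{prult}. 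Note that $\deg(E) = a'+b' \le d-1$ and $d \le m-2$ by (3)--(4), so the hypotheses of Theorem~\ref{prult} are met.

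\textbf{The geometric heart: computing $\alpha_1$ and $\alpha_2$ in each case.} The whole proof turns on determining, for the scheme $E = a'P_\infty + b'P$, the two quantities $\alpha_1 = \max_{L\in\Lambda}\deg(L\cap E)$ and $\alpha_2 = \max_{L\in\Theta}\deg(L\cap E)$. Here the point $P_\infty = (1:0:0)$ lies on \emph{no} line of $\Lambda\cup\Theta$ (both families consist of lines not through $P_\infty$), so the $a'P_\infty$ part of $E$ never contributes to either $\alpha_i$; only $b'P$ matters. I would then split along the three cases according to whether $P = P_0 = (0:0:1)$ and whether $a>0$. The key observations are: $P_0$ lies on the line $L_0\colon y=0$, which is \emph{excluded} from $\Lambda$ by Notation~\ref{linee} and is the only horizontal line through $P_0$, whereas every other point $P\neq P_0$ lies on exactly one horizontal line in $\Lambda$; and the distinction $a>0$ versus $a=0$ controls whether $a'$ can be taken $0$, i.e.\ whether $E$ is supported only at $P$ or genuinely involves $P_\infty$.

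\textbf{Case analysis.} In case (B), when $P\neq P_0$ the point $P$ lies on a horizontal line in $\Lambda$ carrying all $b'$ units of $E$, giving $\alpha_1 = b' = 1$ after the correct minimal choice, hence minimum distance $d+2-\alpha_1 = d+1$ by Theorem~\ref{prult}(2); the lower bound on the codeword count and, under $d\le m-4$ with $\alpha_1=\alpha_2$, the exact count follow from parts (2) and (3). The subcase $a>0, P=P_0$ is handled symmetrically by arranging $E$ so that the relevant maximum is attained off $L_0$. In case (C), with $a>0$ and $P\neq P_0$, both $P_\infty$ and $P$ genuinely appear, and one arranges $\alpha_1 = b'+$ (a contribution forcing $d+2-\alpha_1 = d$), yielding minimum distance $d$ and the stated lower bound $(q-1)(q^2-1)\binom{m}{d}$ via part (2). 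Case (A), where $a=0$ and $P=P_0$, is the degenerate one: since $P_0\in L_0\notin\Lambda$, the support of $E$ sits on the forbidden line, so only the \emph{lower} bound from Theorem~\ref{prult}(1) is available, explaining why only "$\ge d+1$" is claimed.

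\textbf{Main obstacle.} The real work is not invoking Theorem~\ref{prult} but \emph{checking that the integers $d,a',b'$ of (1)--(5) can be chosen so that the computed $\alpha_1,\alpha_2$ produce exactly the asserted distances}, and in particular correctly tracking how $a'$ and $b'$ distribute between $P_\infty$ and $P$. The subtlety is that $aP_\infty+bP$ admits many equivalent representations $dqP_\infty - a'P_\infty - b'P$ (using $(q+1)P \sim (q+1)P_\infty$ from Remark~\ref{equivalenze}), and one must select the representation realizing the extremal intersection behaviour while still satisfying $a'+b'\le d-1$ and $b'>0$. I expect verifying that such a compatible choice exists in each of the three regimes---and that the forbidden status of $L_0$ is exactly what separates case (A) from case (B)---to be the delicate bookkeeping step; once the $\alpha_i$ are pinned down, the conclusions are immediate transcriptions of Theorem~\ref{prult}.
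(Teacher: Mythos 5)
Your overall reduction---strong isometry to $\mathcal{C}(d,-E)$ with $E=a'P_\infty+b'P$, then reading off $\alpha_1,\alpha_2$ and invoking Theorem~\ref{prult}---is exactly the paper's route. But your computation of the $\alpha_i$ rests on a false geometric claim: that $P_\infty$ lies on no line of $\Lambda\cup\Theta$. Only $\Theta$ is defined (Notation~\ref{linee}) by the condition of avoiding $P_\infty$; the family $\Lambda$ consists of the horizontal lines $y=c$, $c\neq 0$, and every one of these, being parallel to $L_0$, passes through their common point at infinity, which is precisely $(1:0:0)=P_\infty$. (Indeed the pencil of lines through $P_\infty$ is exactly $\{L_\infty,L_0\}\cup\Lambda$.) Hence the $a'P_\infty$ part of $E$ can never be discarded when computing $\alpha_1$.

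This error is not cosmetic; it breaks your case analysis exactly where the content is. In case (C) the paper gets $\alpha_1=2$ because the horizontal line through $P$ meets $E_{red}$ at both $P$ and $P_\infty$; under your assumption that line only picks up $P$, you would get $\alpha_1=1$ and conclude minimum distance $d+1$ rather than the asserted $d$ --- your sentence ``one arranges $\alpha_1=b'+$ (a contribution forcing $d+2-\alpha_1=d$)'' is an unexplained patch that contradicts your own premise. In case (B) with $a>0$ and $P=P_0$, the paper's $\alpha_1=1$ comes from $P_\infty$ alone, since $P_0$ lies on no line of $\Lambda$ (its only horizontal line is the excluded $L_0$); your framework gives $\alpha_1=0<1=\alpha_2$, and then parts (2)--(3) of Theorem~\ref{prult}, which require $\alpha_1\ge\alpha_2$, cannot be invoked at all, so neither the exact value $d+1$ nor the codeword count follows in that subcase. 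Finally, the step you yourself flag as the main obstacle---determining which triples $(d,a',b')$ are admissible under (1)--(5), and what values of $a'$ and $b'$ the linear equivalences $mP_0\sim mP_\infty$ and $(q+1)P\sim(q+1)P_\infty$ actually force---is never carried out, so even the cases where your $\alpha_i$ happen to agree with the paper's are left unverified. As it stands the proposal is not a proof.
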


\begin{proof}
 The first part of the proof trivially follows from Remark \ref{equivalenze}.
Set $E:= a'P_\infty+b'P$, and observe that $\mathcal{C}(a,b,P) \sim
\mathcal{C}(d,-E)$. By Remark \ref{ct}, we can apply Theorem \ref{prult}. In the
notation of the cited theorem, in case (A) we have $\alpha_1=0$ and
$\alpha_2=1$, in case (B) we have $\alpha_1=\alpha_2=1$. Finally, in
case (C)
we have $\alpha_1=2$, $\alpha_2=1$.
\end{proof}

\begin{remark}
 Let $\mathcal{C}(\overline{D},D)$ be any non-trivial Goppa code on the curve
$Y$. Let $\{ P_1,...,P_r\}$ be the support of $D$. There always exist integers
$d>0$ and ${(a_i)}_{i=1}^{r}$ such that $a_i \ge 0$ for each $i$ and 
$$D \sim dqP_\infty - \sum_{i=1}^r a_i P_i.$$
By setting $E:=\sum_{i=1}^r a_i P_i$, we have $\mathcal{C}(\overline{D},D) \sim
\mathcal{C}(d,-E)$. Moreover, if $\sum_{i=1}^r a_i \le d-1$ then the code
$\mathcal{C}(\overline{D},D)^\perp$ is described in any case by Theorem
\ref{prult}.
\end{remark}

\section{Conclusions}
In this paper we study the dual minimum distance and minimum weight of codes
arising from quotients of the Hermitian curve through geometric constructions.
We describe the parameters of such codes from a cohomological point of view,
and geometrically characterize the supports of their minimum-weight codewords,
deriving explicit formulas for their number. Our analysis is then applied to
study the dual codes of one-point and two-point Goppa codes on a quotient of
the Hermitian curve.

\begin{acknowledgements}
The authors would like to thank the Referee for useful suggestions and comments
that improved the presentation of this work.
\end{acknowledgements}


\begin{thebibliography}{99}

\bibitem{br} E. Ballico, A. Ravagnani, \emph{A zero-dimensional cohomological
approach to Hermitian codes}. \url{http://arxiv.org/abs/1202.0894}.

\bibitem{bgi} A. Bernardi, A. Gimigliano, M. Id\`{a}, \emph{Computing symmetric
rank for symmetric tensors}. J. Symbolic. Comput. \textbf{46}(1), 34--53 (2011).

\bibitem{clp} Q. Chen, C. Li, D. Pei, \emph{Dimension of Two-Point codes on the
Quotient of Hermitian curves}. IEEE International Conference on Computer Science
and Information Technology, 6, 163--168 (2010). 


\bibitem{col} M. A. Coleman, N. Drake, G. L. Matthews, \emph{Codes from a
quotient of the Hermitian curve attaining the designed distance}. Congressus
Numerantium vol. 182 (2006), 161--170.



\bibitem{c3} A. Couvreur, \emph{The dual minimum distance of arbitrary
dimensional algebraic-geometric codes}. J. Algebra \textbf{350}(1), 84--107
(2012).



\bibitem{gv} A. Garcia, P. Viana, \emph{Weierstrass points on certain
non-classical curves}. Arch. Math., 46, 315--322 (1986)







\bibitem{hk1} M. Homma, S. J. Kim, \emph{Toward the determination of the minimum
distance of two-point codes on a Hermitian curve}. Des. Codes Cryptogr., {\bf
{37}}(1), pp. 11--132 (2005).

\bibitem{hk2} M. Homma, S. J. Kim, \emph{The two-point codes on a Hermitian
curve with
the designed minimum distance}. Des. Codes Cryptogr., {\bf {38}}(1),
pp. 55--81
(2006).

\bibitem{hk3} M. Homma, S. J. Kim, \emph{The Two-Point Codes with the Designed
 Distance on a Hermitian Curve in Even Characteristic}.  Des. Codes Cryptogr.,
{\bf
{39}}(3), pp. 375--386 (2006).

\bibitem{hk4} M. Homma, S. J. Kim, \emph{The complete determination of the
minimum
distance of two-point codes on a Hermitian curve}.  Des. Codes Cryptogr., {\bf
{40}}(1), pp. 5--24 (2006)











\bibitem{Kor} G. Korchm\'{a}ros, G. P. Nagy, \emph{Hermitian codes from higher
degree places}. \url{http://arxiv.org/abs/1206.4480}.


\bibitem{LS} F. Levy-dit-Vehel, S. Litsyn, \emph{Parameters of Goppa codes
revisited}. IEEE Trans. Inf. Theory, \textbf{43}(6), 1811--1819 (1997).




\bibitem{matth1} G. L. Matthews, \emph{Weierstrass semigroups and codes from a
quotient of the Hermitian curve}. Des. Codes Cryptogr., 37, 473--492
(2005).

\bibitem{mp} C. Munuera, R. Pellikaan, \emph{Equality of geometric Goppa codes
and equivalence of
divisors}. J. Pure Appl. Algebra, 90, 229--252 (1993).



\bibitem{Park} S. Park, \emph{Minimum distance of Hermitian two-point codes}.
Des. Codes Cryptogr., 57, 195--213 (2010).


\bibitem{RS} H. G. Ruck, H. Stichtenoth, \emph{A characterization of Hermitian
function fields over finite fields}. J. Reine Angew. Math. \textbf{457},
185--188 (1994).


\bibitem{Ste} S. A. Stepanov, \emph{Codes on Algebraic Curves}. Springer, 1999.



\bibitem{Sti} Stichtenoth, \emph{Algebraic function fields and codes}, Second
Edition. Springer-Verlag, 2009.

\bibitem{yk} K. Yang, P. V. Kumar, \emph{On the true minimum distance of
Hermitian
codes}. Coding theory and algebraic geometry, Lect. Notes in Math.
1518, pp. 99--107, Springer, Berlin, 1992.

\bibitem{yks} K. Yang, P. V. Kumar, H. Stichtenoth, \emph{On the weight
hierarchy
of geometric Goppa codes}. IEEE Trans. Inform. Theory {\bf {40}}, pp. 913--920
(1994).



\end{thebibliography}
\end{document}